
\documentclass[pdflatex,sn-mathphys-num]{sn-jnl}

\usepackage{svg}%
\usepackage{graphicx}%
\usepackage{multirow}%
\usepackage{amsmath,amssymb,amsfonts}%
\usepackage{amsthm}%
\usepackage{mathrsfs}%
\usepackage[title]{appendix}%
\usepackage{xcolor}%
\usepackage{textcomp}%
\usepackage{manyfoot}%
\usepackage{booktabs}%
\usepackage{algorithm}%
\usepackage{algpseudocode}%
\usepackage{listings}%
\usepackage{comment}
\usepackage{enumitem}
\usepackage{hyperref}
\usepackage{transparent}

\DeclareMathOperator{\conv}{Co}
\DeclareMathOperator{\inte}{int}
\DeclareMathOperator{\dom}{dom}

\newcommand{\GAP}{\mbox{\sc gap}}
\newcommand{\myphi}{\zeta}

\theoremstyle{thmstyleone}%
\newtheorem{theorem}{Theorem}
\newtheorem{proposition}[theorem]{Proposition}%
\newtheorem{lemma}[theorem]{Lemma}%
\newtheorem{corollary}[theorem]{Corollary}%
\theoremstyle{thmstyletwo}%
\newtheorem{remark}{Remark}%

\theoremstyle{thmstylethree}%

\raggedbottom

\begin{document}

\title[Article Title]{A Dantzig-Wolfe Decomposition Method for Quasi-Variational Inequalities}


\author[1]{\fnm{Manoel} \sur{Jardim}}\email{manoel.jardim@impa.br}

\author[2]{\fnm{Claudia} 
\sur{Sagastizábal}}\email{sagastiz@unicamp.br}

\author*[1]{\fnm{Mikhail} \sur{Solodov}}\email{solodov@impa.br}

\affil[1]{\orgname{IMPA}, \orgaddress{\street{Estrada Dona Castorina 110}, \city{Rio de Janeiro}, \postcode{22460-320}, \state{RJ}, \country{Brasil}}}

\affil[2]{\orgdiv{IMECC}, \orgname{Unicamp}, \orgaddress{\street{Rua Sérgio Buarque de Holanda}, \city{Campinas}, \postcode{13083-859}, \state{SP}, \country{Brasil}}}


\abstract{We propose an algorithm to solve quasi-variational inequality problems,
based on the Dantzig-Wolfe decomposition paradigm. 
Our approach solves in the subproblems variational inequalities, which is a simpler problem,  
while restricting quasi-variational inequalities in the master subproblems, making them
generally much smaller in size when the original problem is large-scale. 
We prove global convergence of our algorithm, assuming that the mapping of the 
quasi-variational inequality is either single-valued and continuous
or it is set-valued maximally monotone.
 Quasi-variational inequalities serve as a framework for several 
equilibrium problems, and we apply our algorithm to  an important example 
in the field of economics, namely the Walrasian equilibrium problem formulated as a 
generalized Nash equilibrium problem. 
Our numerical assessment demonstrates the usefullness
of the approach 
for large-scale cases, where thanks to decomposition it can solve problems not tractable directly
by state-of-the-art software.
}

\keywords{Quasi-variational inequality; Dantzig-Wolfe decomposition; Variational inequality;
 Walrasian equilibrium problem; Generalized Nash equilibrium problem.}



\maketitle
\newpage
\section{Introduction}
The framework provided by a quasi-variational inequality (QVI) 
setting encompasses many problems related to 
optimization and equilibrium; see \cite{2004}.
The goal of a QVI problem is to find  a pair
$(x^*,z^*)$ such that
\begin{equation}\label{qVI}
x^* \in K(x^*) \mbox{ with } z^* \in F(x^*) \mbox{ satisfy }
\left< z^*, y-x^* \right> \geq 0\; \mbox{ for all }y \in K(x^*).
\end{equation}
In these relations, $\left< \cdot , \cdot \right>$ stands for
the Euclidean inner product in $\mathbb{R}^n$ and
$F:\mathbb{R}^n \rightrightarrows \mathbb{R}^n$ 
and $K:\mathbb{R}^n \rightrightarrows \mathbb{R}^n$ are two
maps associating vectors in $\mathbb{R}^n$ to subsets of $\mathbb{R}^n$.
A variational inequality (VI) corresponds to
a less general case, with 
$K$ in \eqref{qVI} being a constant set-valued map, i.e., 
$K(x)=D\subset \mathbb{R}^n$ for all $x\in \mathbb{R}^n$.
In most meaningful VI settings, in particular those subsuming 
local optimality conditions in optimization, the set $D$ is convex.
If $D$ is further the nonnegative orthant, the corresponding
VI is a
nonlinear complementarity problem.
The so-called mixed complementarity problem arises when
the set $D$ is a generalized box (given component-wise by one-sided or two-sided
bound constraints, or no constraints at all for some components). 
 For details and related discussions, we refer again to \cite{2004}.  

The introduction in \cite{Kanzow_2019} provides a good review of the history of QVIs. We recall 
here that the first work on the subject dates back to 
\cite{Bensoussan1973}, where impulsive control problems were formulated according to the format \eqref{qVI}. 
The framework has proven effective in handling important applications in engineering 
\cite{Outrata_1998,Kravchuk_2007}, 
transportation \cite{SCRIMALI_2004}, and economics \cite{Harker_1991}. 
In particular, QVIs offer a favorable environment for modeling equilibria, 
of the Radner type as in \cite{Aussel_2021}, or resulting from a generalized 
Nash game \cite{Facchinei_2007}.

Much of QVI literature focuses mainly on theoretical results, 
especially concerning the existence of solutions. 
Algorithmic research on the subject is less developed, probably due to 
certain inherent difficulties associated to QVIs. 
The proposal in \cite{Fukushima_2007}, to solve \eqref{qVI} by
minimizing a nonsmooth gap function, does not present a
specific solution algorithm.
Inspired by the structure of generalized Nash games,
\cite{Pang_2005} solves sequentially VIs 
that are shown to converge to a solution of \eqref{qVI}.
The scheme was revisited and enhanced in
\cite{Facchinei_2010,Kanzow_2016,Kanzow_2017}. 
Newtonian approaches for solving
the system of optimality conditions derived from \eqref{qVI}
were considered in the works \cite{Hinterm_ller_2012,Outrata_1995}, 
reporting only some limited numerical experiments.
The specialized potential reduction interior point method proposed in
\cite{Facchinei_2013} exhibits good performance on
the test library in \cite{QVILIBAL}. 

It is, nonetheless, unclear how well the aforementioned algorithms scale for large instances.
In mathematical optimization, decomposition methods are 
best suited to handle large problems.
To provide a first step in this direction when dealing with QVIs,
we introduce a Dantzig-Wolfe-like method that is shown 
to converge to a solution of \eqref{qVI} under natural assumptions. 
 Borrowing the terminology for linear programs (LPs),
the proposed algorithm
alternates between solving a simple QVI as its ``master program''
and a VI as its ``subproblem''. 
The motivation is to tailor the method
 to facilitate subproblem separability,   
crucial when dimensions in \eqref{qVI} are large.
We also note the following interesting (and advantegeous!) feature
of our method, compared to Dantzig-Wolfe techniques in some previous settings.
For linear programming and variational inequalities,
the master program and subproblem to be solved at each iteration are
of the same complexity class (LPs and VIs respectively, though 
they are of course simpler than the original LP or VI, so that
the decomposition makes sense). By contrast,
in our Dantzig-Wolfe decomposition for QVIs, the  
subproblem to be solved in a given iteration actually belongs to the
{\em structurally simpler class} than the original problem: it is a
VI instead of a QVI (see the right box in Figure~\ref{fig-scheme}  below).

We mention, in passing, that sometimes QVIs can be reduced to a VI, 
as is the case of variational equilibria of some Nash games 
\cite{Facchinei_2007_,Kulkarni_2012}. 
For a discussion of related issues in the context of
energy markets we refer to \cite{lss:models}.
Furthermore, the reproducible set-valued maps introduced in 
 \cite{Aussel_2016} 
identify situations in which solving a VI provides all the solutions to a QVI. But 
these are very specific configurations, far from representing general situations. 
Indeed, QVIs have a significantly more complex structure than VIs, and therefore more sophisticated tools and developments are needed to solve problems such as \eqref{qVI} using decomposition techniques.

The Dantzig-Wolfe (DW) decomposition method was introduced in \cite{Dantzig_1961} 
to solve large linear programs having a structured feasible set, whose constraints 
can be separated into ``hard'' and ``easy'' ones (generally separable). 
Each iteration solves a master program followed by a subproblem, 
respectively aiming at guaranteeing feasibility and objective function decrease. 
The master program outputs multipliers associated with the hard constraints, which
define the subproblem objective function. The subproblem then has an ``easy'' feasible set, 
usually yielding further separable problems that can be solved in parallel.
The subproblem solution is informed to the master
program, so that in the next iteration the approximation of the master's feasible set is improved.

The DW decomposition for VIs was introduced in \cite{Fuller_2005,Chung_2010}.
The approach was extended and improved in \cite{Luna_2012}, including several
theoretical generalizations and an application to large-scale 
generalized Nash games.
Another use of DW techniques of \cite{Luna_2012} is
given in \cite{lss:risk}, where a class of risk-averse stochastic equilibrium problems 
is considered, numerically assessed on the ``real-world'' European market of natural gas. 
Benders decomposition for VIs, which is the dual 
approach to DW, is presented in \cite{lss:Benders}.

In this work, we show how
the DW approach can be applied to very general QVIs. 
When iterating between solving QVI-master problems and VI-subproblems sequentially, 
our DW method computes in the process a certain gap function that provides information about 
convergence.  
As illustrations, we start by computing the Walrasian equilibrium of 
large economies, a very classical and important problem in economics, 
 recently considered in \cite{deride,bss:decomp}. 
We afterwards solve also some 
academic QVI examples from \cite{QVILIBAL,Facchinei_2013}, said to have
``moving sets''.  In both cases,
our approach gives the same results as
the commercial solver GAMS \cite{GAMS}, with substantially less computational
effort for the larger dimensions. 
Over many random instances generated 
for the Walrasian equilibrium problem, the DW method computing times are not only shorter but consistently less
volatile than those for the direct solution by GAMS. 

The rest of the paper is organized as follows. 
In Section \ref{dwqvi}, we give the mathematical formulation of 
our DW algorithm.
Section \ref{convanalysis} is devoted to convergence results. 
When the problem has a certain special structure,
Section \ref{jacobi} gives details about techniques to uncouple variables and make subproblems separable.
Presenting the two sets of
numerical experiments (Walrasian equilibrium and moving set example),
the results in Section \ref{numericalresults} provide empirical evidence
of the good performance of our method on large-scale problems,
when compared with the direct application
of GAMS software.
 
Some final words about our notation and terminology.
The Euclidean inner product in $\mathbb{R}^n$ is denoted by
$\left< \cdot , \cdot\right>$ 
 and $\|\cdot\|$ is the associated norm.
By $\|\cdot\|_{\infty}$ we denote the maximum norm. For a set $D\subset \mathbb{R}^n$,
its interior is denoted by $\inte(D)$ and
its convex hull (the smallest convex set in $\mathbb{R}^n$
that contains $D$), is denoted by $\conv{D}$.
  For a convex set $D$, the notation
$\mathcal{N}_D (x) =\{u : \left< u , y-x\right> \le 0,\; \forall\, y\in D\}$
stands for the normal cone to $D$ at $x$ when $x\in D$ ($\mathcal{N}_D (x) =\emptyset$
otherwise).
The mapping $F:\mathbb{R}^n \rightrightarrows \mathbb{R}^n$ is strongly monotone
if there exists $c>0$ such that
$\left< u -v , x-y\right> \ge c\|x-y\|^2$ for all 
$x,y\in \dom(F)=\{z\in \mathbb{R}^n : F(z)\neq\emptyset\}$ and
all $u\in F(x)$, $v\in F(y)$.
Also, $F$ is monotone if the above inequality holds for $c=0$.
A monotone set-valued mapping $F:\mathbb{R}^n \rightrightarrows \mathbb{R}^n$
 is maximally monotone if its graph 
$\{(x,u)\in \mathbb{R}^n \times \mathbb{R}^n \,:\, 
x\in \dom(F),\, u\in F(x)\}$
is not properly contained in the graph of any other monotone mapping. 
We also use the properties that
a maximally monotone  operator 
 $F$ is both locally bounded in $\inte(\dom(F))$ \cite{Rockafellar_1969},
and outer-semicontinuous \cite[Chapter 4]{Burachik_2008}.

\section{Dantzig-Wolfe decomposition for QVI}\label{dwqvi}

Let there be
given functions $h:\mathbb{R}^n \to \mathbb{R}^l$ and
 $g(\cdot,\cdot):\mathbb{R}^n\times\mathbb{R}^n\to \mathbb{R}^m$,
with $h$ convex, and
$g(\cdot,x)$ convex and differentiable for each $x\in\mathbb{R}^n$,
and the set in \eqref{qVI} be given by
\begin{equation} \label{Kx}
K(x)=K_g(x)\cap K_h, 
\mbox{ where } \begin{array}{lcl}
K_g(x)&=&\left\{y\in \mathbb{R}^n: g(y,x)\leq 0\right\}\\
K_h&=&\left\{y\in \mathbb{R}^n: h(y)\leq 0\right\}\,.
\end{array}
\end{equation}

The QVI setting
is a good candidate for the DW technique,
as in \eqref{Kx} ``difficult'' constraints can be considered those
involving both variables $x$ and $y$, while the constraints in $y$ are 
naturally ``easy'' (or at least easier). 

We assume that $K_h \subset \inte(\dom(F))$,  
and that some $y^1\in K_g(y^1)\cap K_h$ is given, to start the process.

\subsection{General organization and master QVI definition}
In \eqref{Kx}, it seems easier to ensure feasibility with respect to $K_h$.
Since the $x$-parameterized constraints $g(y,x)$ are naturally harder to handle, 
they are dealt with in the master.
Similarly to the DW scheme in linear programming,
the corresponding multiplier associated to the parametrized constraint
 provides its Lagrangian relaxation, which
 is incorporated into the definition of the operator in the subproblem, whose
feasible set is $K_h$, and it is therefore a VI.

\begin{figure}[hbt]
\centering
\scalebox{0.5}{\input{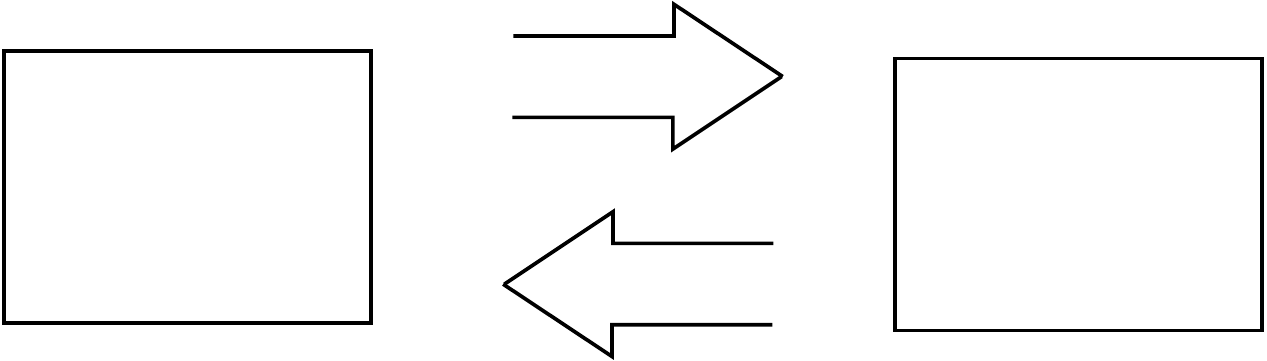_t}}
\caption{Some elements of the DW decomposition for \eqref{qVI}-\eqref{Kx}. On the left, 
the master problem
solves a QVI that outputs $x^k$ and a multiplier $\mu^k$ associated to
the $g$-constraints. This primal-dual pair is used by the subproblem
on the right to define the operator $F^k$, and return a solution $y^{k+1}$ 
to the master problem. The output of the subproblem is used by the master in the next iteration, to define the set $K_h^{k+1}$.}\label{fig-scheme}
\end{figure}

To introduce gradually the notation, the scheme in Figure~\ref{fig-scheme} reports
the information exchanged between the master problem and 
the subproblem at each iteration $k$ of the process.
The information generated when solving the VI subproblem at iteration $k$,
which is $y^{k+1}$, is used by the master QVI at iteration $k+1$
to approximate the set $K_h$.
The approximation takes the convex hull of the output from 
the past VI-subproblems.

When formulating the master QVI solved
  at iteration $k$, the last subproblem information available 
to the master is $y^k$.
Accordingly, the QVI master \eqref{master} at iteration $k$ finds a pair $(x^{k},z_m^{k})$ for which
\begin{equation}\label{master}
\begin{array}{rlllll}
x^{k} &\in K_g(x^{k})\cap &K_h^{k},\; z_m^{k} \in F(x^{k}) 
&\mbox{satisfy}&
\left< z_m^{k}, x-x^{k} \right> \geq 0\\
&&& \mbox{ for all }&x \in 
 K_g(x^{k})\cap K_h^{k}\\
&\mbox{}&
\mbox{where }
K_h^{k}=\conv Y^{k}&\mbox{ and }&Y^{k}=\left\{y^1,\ldots,y^{k}\right\}\,.
\end{array}
\end{equation}
Another part of the output of the QVI master \eqref{master} is
a multiplier $\mu^k$ associated to
the $g$-constraints. 
Because the intersection with $\conv Y^{k}$
reduces the search for feasible elements to 
a $(k-1)$-dimensional simplex, the master 
problem \eqref{master} is a relatively simple QVI even if $n$ is very large.

\subsection{Subproblem definition}
As shown by Figure~\ref{fig-scheme}, after the master \eqref{master} is solved, the VI-subproblem receives 
in addition to $x^{k}$, information on
 $\mu^{k}$, the multiplier associated with the constraints defining $K_g(x^{k})$.
The pair $(x^k,\mu^k)$ is used to define the 
subproblem operator $F^k$, 
 for example as
\begin{equation}\label{def-Fk}
F^k(y) = F(y)+\sum\limits_{i=1}^m \mu_i^k\nabla_yg_i(x^k,x^k)\,.\end{equation}
Other choices are possible, depending on how the first and second terms,
respectively involving $F$ and $\nabla_y g$, are approximated.
For the latter, notice that \eqref{def-Fk} makes a constant approximation
of the gradient of the hard constraint.
The approximation could also let the first variable free or, to have a more
flexible framework, both options could be combined using
a parameter $\omega_i^k\in[0,1]$. It is then notationally
convenient to write the second term in \eqref{def-Fk} in matrix-vector form. 
Thus, we introduce a (transposed Jacobian) matrix $\Gamma^k(y)$ of order $n\times m$
with columns defined as below, for $j=1,\ldots,m$:
\begin{equation}\label{aprox-grad}
{\Gamma}_j^k(y)=\left\{\begin{array}{rlll}
&\nabla_yg_j(x^k,x^k)&\mbox{(constant)}\\
& \nabla_yg_j(y,x^k)&\mbox{(free)}\\
\omega_j^k&\nabla_yg_j(x^k,x^k)
+(1-\omega_j^k) \nabla_yg_j(y,x^k) &\mbox{(convex combination).}
\end{array}\right.
\end{equation}
Some comments regarding \eqref{aprox-grad} are in order.
When the product \(\Gamma^k(y)\mu^k\) is computed using
the (constant) first option above, it coincides with the expression for the second term in \eqref{def-Fk}.
The second option in \eqref{aprox-grad} can be advantageous when we aim to retain more information about 
 the $g$ constraints in the subproblem formulation. 
For our instances of Walrasian equilibrium in Section \ref{numericalresults},
the function $g$ depends linearly on $y$ (see \eqref{W-K}), and then all the options become the same.
In general though, this need not be the case, and having this feature adds
to the broader flexibility and applicability of the framework.
The impact of the third option is assesed in the numerical experiments on moving set problems
in Subsection~\ref{mov}.
Finally, notice that, although not made explicit in the notation,
the third option involves the parameter $\omega^k\in[0,1]^m$.

A similar approach can be employed for the first term in \eqref{def-Fk}, that is, 
approximating $F$ by an operator $\widehat F^k$ that
uses information output by the master QVI.
Along the lines in \cite{Luna_2012}, the approximation can be constant,
of first-order, or the actual mapping:
\begin{equation}\label{aprox}
\widehat{F}^k(y)=\left\{\begin{array}{lll}
\{z^k_m\}& \text{for } z_m^k \in F(x^k) \mbox{ from \eqref{master}}&\mbox{(constant)}\\
F(x^k)+\nabla F(x^k)(y-x^k),&\text{if $F\in C^1$ is single-valued}&\mbox{(first order)} \\
F(y)&&\mbox{(exact)}.\end{array}\right.
\end{equation}
Finally, if needed or advantageous,  
the VI-subproblem operator can also be regularized using
an $n\times n$ positive (semi)definite matrix
$Q^{k}$.

In full generality,
the VI subproblem \eqref{subp} at iteration $k$ finds a pair $(y^{k+1},z_s^{k+1})$ for which
\begin{equation}\label{subp}
\begin{array}{llll}
y^{k+1} \in K_h\,,&  z_s^{k+1} \in F^k(y^{k+1}) 
&\mbox{satisfy}&
\left< z_s^{k+1}, y-y^{k+1} \right> \geq 0\; \mbox{ for all }y \in K_h\\
&& \mbox{where}&
F^{k}(y)=\widehat{F}^k(y)+\Gamma^k(y)\mu^k +Q^{k}(y-x^k)\\
&&&\mbox{with $\widehat F^k$  from \eqref{aprox} and 
$\Gamma^k$ from \eqref{aprox-grad}.}
\end{array}
\end{equation}
For later use notice that, after solving \eqref{master}
and \eqref{subp}, the inclusions
\begin{equation} \label{atxk}
\begin{array}{rcl}
\myphi^k&=&z^k_m+\Gamma^k(x^k)\mu^k 
\in F^k(x^k)\mbox{ and }\\
\widehat\myphi^k&=&z^{k+1}_s-\Gamma^k(y^{k+1})\mu^k-Q^k(y^{k+1}-x^k)
\in \widehat F^k(y^{k+1})\end{array}
\end{equation}
hold for all the approximating variants proposed in \eqref{aprox-grad} and \eqref{aprox}.

Both QVI-master \eqref{master}
and VI-subproblem \eqref{subp} are assumed to be solvable at
all iterations.
We shall not go into extensive discussions on sufficient conditions for that assumption to hold, because
they are well known. 
For the VI subproblems,
in particular, solvability follows 
if the set $K_h$ is compact. 
Regardless of compactness of that set, 
subproblems are always solvable if 
the approximation $F^k$ in \eqref{subp} is strongly monotone, 
a property that can be ensured in our setting, as explained next. 

\begin{remark}[On strong monotonicity of subproblem operators]
\label{monotonicity} \em 

The approximating mapping $F^k$ in \eqref{subp} can always be chosen
to be strongly monotone.
To see this, note that $F^k$ involves three terms,
the first one being the approximation
$\widehat{F}^k$ defined in \eqref{aprox}. 
If $F$ is monotone, any option in \eqref{aprox} preserves
monotonicity. If
$F$ is not monotone, 
the constant option in \eqref{aprox} makes
the term $\widehat{F}^k$ monotone.
Regarding the second term defining $F^k$, it is always  
monotone because, by \eqref{aprox-grad}
 and by the convexity of $g_i(\cdot,x^k)$, it holds that
\begin{align*} 
\left<\Gamma^k(y)\mu^k-\Gamma^k(x)\mu^k,y-x\right> = \sum\limits_{i=1}^m \mu_i^k(1-\omega_i^k)\left<\nabla_yg_i(y,x^k)-\nabla_yg_i(x,x^k),y-x\right> \geq 0.
\end{align*}
The first two terms in $F^k$ are
monotone. Since in the third term the matrix 
$Q^k$ can be chosen positive definite matrix if needed,
strong monotonicity of $F^k$ follows.\qed
\end{remark}

\section{Algorithm statement and its convergence}\label{convanalysis}

After the master QVI is solved,
the information $x^k,z_m^k,\mu^k$ is available.
For $y\in\mathbb{R}^n$, we define the gap function
\begin{equation}
\GAP(y)=\left<\zeta^k, 
 y-x^k\right>\mbox{ with }
\zeta^k=z_m^k+\sum_{i=1}^m\mu_i^k\nabla_y g_i(x^k,x^k)
\in F^k(x^k) ,
\label{gap}
\end{equation}
where the inclusion $\zeta^k\in F^k(x^k)$ is by \eqref{atxk},
recalling also \eqref{aprox-grad} to evaluate $\zeta^k$. 

Convergence of our DW decomposition method is determined as in the 
linear programming setting, by monitoring the value
of the gap function at $y^{k+1}$, the subproblem solution. 
The framework of the method is outlined in Algorithm \ref{algo1}.
\begin{algorithm}[hbt]
\caption{Dantzig-Wolfe decomposition for QVI}\label{algo1}
\begin{algorithmic}
\Require $y^1 \in K_g(y^1)\cap K_h$.
\Ensure Accumulation points of the iterates solving QVI \eqref{qVI}-\eqref{Kx}.
\State Set $\GAP^1=-\infty$ and $k \leftarrow 1$.
\While{$\GAP^k<0$} 
\State {\sc master solution}: 
 solve \eqref{master} to compute the pair $(x^k, z_m^k)$ and 
the multiplier $\mu^k$.
\State {\sc subproblem solution}: 
 solve \eqref{subp} to compute $y^{k+1}$.
\State {\sc stopping criterion}: 
Compute $\GAP^{k+1}=\GAP(y^{k+1})$ defined in \eqref{gap}.
\State {\sc update}: 
$k \leftarrow k+1$
\EndWhile
\end{algorithmic}
\end{algorithm}

In Theorem~\ref{principal}
we shall prove that accumulation points of
the iterates generated by Algorithm~\ref{algo1} solve QVI \eqref{qVI}-\eqref{Kx}.

We first state            
the Karush-Kuhn-Tucker (KKT) conditions 
for \eqref{master}, which among other things specify the multiplier
$\mu^k$ employed in both \eqref{subp} and \eqref{gap}.

\begin{theorem}[KKT conditions for master QVI]
\label{kktqviset}
Under any suitable constraint qualification, if the pair $\left(x^k,z^k_m\in F(x^k)\right)$ solves the 
master QVI \eqref{master} at iteration $k$, 
then there exists a Lagrange multiplier $\mu^{k} \in \mathbb{R}^m$ such that
\begin{equation}\label{kkt1}
0 \in z_m^k+\sum\limits_{i=1}^m \mu^{k}_i\nabla_y g_i(x^{k},x^{k}) + 
\mathcal{N}_{K_h^k}(x^{k}),
\end{equation}
\begin{equation}\label{kkt2}
\mu_i^{k}\ge 0 ,\quad g_i(x^{k},x^{k}) \leq 0, \quad
\mu_i^{k} g_i(x^{k},x^{k}) =0, \; i=1,\ldots, m.
\hfill\qed\vspace{1em}
\end{equation}
\end{theorem}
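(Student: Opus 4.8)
The statement to prove is a standard KKT characterization for the master QVI, so the plan is to reduce the QVI to a convex variational inequality with fixed data and then apply the usual KKT machinery.

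\medskip

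The plan is to exploit the defining feature of the QVI solution that the ``moving'' set $K_g(x^k)\cap K_h^k$ is evaluated \emph{at the solution itself}. First I would freeze the parameter: once $x^k$ is a solution of \eqref{master}, the set $K_g(x^k)\cap K_h^k$ is a fixed convex set (recall $g(\cdot,x^k)$ is convex and $K_h^k=\conv Y^k$ is polyhedral, hence convex), and the pair $(x^k,z_m^k)$ solves the ordinary variational inequality $\left<z_m^k,x-x^k\right>\ge 0$ for all $x\in K_g(x^k)\cap K_h^k$. The standard variational-inequality-to-inclusion equivalence then gives $-z_m^k\in\mathcal N_{K_g(x^k)\cap K_h^k}(x^k)$, i.e.
\begin{equation}\label{eq:plan-incl}
0\in z_m^k+\mathcal N_{K_g(x^k)\cap K_h^k}(x^k).
\end{equation}

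\medskip

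Next I would split this normal cone across the intersection. Under a constraint qualification (the hypothesis ``any suitable constraint qualification''), the normal cone to the intersection decomposes as the sum of the normal cone to $K_h^k$ and the normal cone to $K_g(x^k)=\{y:g(y,x^k)\le 0\}$ at $x^k$. For the latter, the convexity and differentiability of $y\mapsto g_i(y,x^k)$ allow the usual representation of the normal cone to a smooth convex feasible region via nonnegative multipliers on the active gradients: there exist $\mu_i^k\ge 0$ with $\mu_i^k g_i(x^k,x^k)=0$ such that $\sum_{i=1}^m\mu_i^k\nabla_y g_i(x^k,x^k)\in\mathcal N_{K_g(x^k)}(x^k)$. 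Substituting this decomposition into \eqref{eq:plan-incl} yields exactly \eqref{kkt1}, while the sign and complementarity relations on $\mu^k$ are precisely \eqref{kkt2}. The differentiation is with respect to the first argument only, which matches the $\nabla_y g_i(x^k,x^k)$ appearing in the statement (both arguments evaluated at $x^k$ because the set is frozen at the solution).

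\medskip

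The only genuinely delicate point is the normal-cone sum rule in the second step: the equality $\mathcal N_{K_g(x^k)\cap K_h^k}(x^k)=\mathcal N_{K_g(x^k)}(x^k)+\mathcal N_{K_h^k}(x^k)$ and, simultaneously, the multiplier representation of $\mathcal N_{K_g(x^k)}(x^k)$ are exactly what a constraint qualification buys. Since the statement hypothesizes ``any suitable constraint qualification,'' I would simply invoke a standard one (Slater, or linear independence / Mangasarian-Fromovitz of the active constraints at $x^k$) to license both the decomposition and the existence of the multiplier $\mu^k$, rather than proving the sum rule from scratch. I expect this to be the main obstacle to a fully self-contained argument, but it is entirely routine convex-analytic KKT theory once the QVI has been frozen into a convex VI at its solution, so the proof reduces to citing the appropriate optimality result.
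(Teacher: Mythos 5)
Your argument is correct, but it is worth pointing out that the paper does not actually prove Theorem~\ref{kktqviset}: it states the result and defers to \cite{Kanzow_2019}, where KKT conditions for QVIs are derived in the more general Banach-space setting under Robinson's constraint qualification, and to \cite{mvs:CQs} for other finite-dimensional constraint qualifications. Your proposal is the standard finite-dimensional argument underlying those references, and it is sound: freezing the moving set at the solution turns the QVI \eqref{master} into an ordinary VI over the fixed convex set $K_g(x^k)\cap K_h^k$ (convexity holds since $g(\cdot,x^k)$ is convex and $K_h^k=\conv Y^k$ is a polytope), which is equivalent to the inclusion $0\in z_m^k+\mathcal{N}_{K_g(x^k)\cap K_h^k}(x^k)$; a constraint qualification then licenses both the sum rule $\mathcal{N}_{K_g(x^k)\cap K_h^k}(x^k)=\mathcal{N}_{K_g(x^k)}(x^k)+\mathcal{N}_{K_h^k}(x^k)$ and the representation of $\mathcal{N}_{K_g(x^k)}(x^k)$ by nonnegative multipliers on active gradients $\nabla_y g_i(x^k,x^k)$, which together give \eqref{kkt1}--\eqref{kkt2}. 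You correctly isolate the two places where the hypothesis ``any suitable constraint qualification'' is consumed, and you correctly note that only the $g$-constraints need multipliers while $\mathcal{N}_{K_h^k}(x^k)$ stays abstract, exactly as in \eqref{kkt1} (a convenient point here, since $K_h^k$ is given as a convex hull of points rather than by inequalities; its polyhedrality even allows the weaker Rockafellar-type condition $K_h^k\cap\operatorname{ri} K_g(x^k)\neq\emptyset$ for the sum rule). In short, your self-contained route buys transparency in the finite-dimensional convex case actually used by the algorithm, whereas the paper's citation buys generality (set-valued $F$, Banach spaces, Robinson's CQ) without restating standard convex-analytic machinery.
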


We do not include a proof of this result, referring to
the corresponding theorem in \cite{Kanzow_2019}
in a more general setting of QVIs in Banach spaces,
under the Robinson's constraint qualification.
   For other suitable constraint qualifications in finite dimensions,
see \cite{mvs:CQs}.

\subsection{Gap function and finite termination} 

Convergence of Algorithm~\ref{algo1} relies  on the stopping
criterion $\GAP^k$ being
 driven to zero by the iterative process. For this reason, we
start by examining the properties of the gap function. 
Recall that according to Remark~\ref{monotonicity},
the mapping $F^k$ can always be chosen to be strongly monotone.

\begin{lemma}[Gap properties]\label{gap-lemma}
Let $F^k$ be strongly monotone.
At each iteration $k$ of Algorithm~\ref{algo1}, 
the following holds
for the gap function defined in \eqref{gap}: 
\begin{enumerate}
\item \label{le-gap}
$\GAP(y^{k+1})\leq 0$;
\item \label{ge-gap}
$\GAP(y)\geq 0 \mbox{ for all }y\in K_h^k$;
\item\label{z-gap} $\GAP(y^{k+1})=0$ if and only if $x^k=y^{k+1}$; and
\item\label{n-gap} if $\GAP(y^{k+1})<0$ then $y^{k+1}\not\in K_h^k$.
Hence, $K_h^k \subsetneq K_h^{k+1}$.
\end{enumerate}
\end{lemma}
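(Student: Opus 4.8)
The plan is to establish the four assertions in the order 2, 1, 3, 4, since the later parts lean on the earlier ones, and to rely throughout on the two characterizing inequalities already at hand: the KKT system \eqref{kkt1}--\eqref{kkt2} for the master and the subproblem variational inequality \eqref{subp}, together with the strong monotonicity of $F^k$ and the inclusions \eqref{atxk}. A preliminary observation I would record is that $x^k\in K_h$: each $y^j$ is feasible for the subproblem and hence lies in $K_h$, and since $h$ is convex $K_h$ is convex, so $K_h^k=\conv Y^k\subseteq K_h$ and in particular $x^k\in K_h^k\subseteq K_h$. I would also note that $\Gamma^k(x^k)\mu^k=\sum_i\mu_i^k\nabla_y g_i(x^k,x^k)$ for every variant in \eqref{aprox-grad}, so that indeed $\zeta^k=z_m^k+\Gamma^k(x^k)\mu^k\in F^k(x^k)$, as recorded in \eqref{atxk}. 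Assertion 2 then follows by unwinding \eqref{kkt1}: it states $-\zeta^k\in\mathcal{N}_{K_h^k}(x^k)$, which by definition of the normal cone means $\left<-\zeta^k,y-x^k\right>\le 0$, i.e. $\left<\zeta^k,y-x^k\right>\ge 0$, for every $y\in K_h^k$; this is exactly $\GAP(y)\ge 0$ on $K_h^k$.

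For assertion 1, the key is a descent estimate coming from strong monotonicity. Since $x^k\in K_h$, I may take $y=x^k$ as a test point in the subproblem VI \eqref{subp}, obtaining $\left<z_s^{k+1},x^k-y^{k+1}\right>\ge 0$, that is, $\left<z_s^{k+1},y^{k+1}-x^k\right>\le 0$. On the other hand $z_s^{k+1}\in F^k(y^{k+1})$ and $\zeta^k\in F^k(x^k)$, so strong monotonicity of $F^k$ with constant $c>0$ gives $\left<z_s^{k+1}-\zeta^k,y^{k+1}-x^k\right>\ge c\|y^{k+1}-x^k\|^2$. Combining the two,
\[
\GAP(y^{k+1})=\left<\zeta^k,y^{k+1}-x^k\right>\le \left<z_s^{k+1},y^{k+1}-x^k\right>-c\|y^{k+1}-x^k\|^2\le -c\|y^{k+1}-x^k\|^2\le 0,
\]
which proves assertion 1 and, as a byproduct, yields the sharper bound $\GAP(y^{k+1})\le -c\|y^{k+1}-x^k\|^2$.

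This byproduct settles assertion 3 at once: if $x^k=y^{k+1}$ then $\GAP(y^{k+1})=\left<\zeta^k,0\right>=0$, and conversely $\GAP(y^{k+1})=0$ forces $-c\|y^{k+1}-x^k\|^2\ge 0$, hence $y^{k+1}=x^k$. Assertion 4 then follows by contraposition from assertion 2: if $\GAP(y^{k+1})<0$, then $y^{k+1}$ cannot lie in $K_h^k$, on which the gap is nonnegative; and since $K_h^{k+1}=\conv(Y^k\cup\{y^{k+1}\})\supseteq K_h^k$ contains the point $y^{k+1}\notin K_h^k$, the inclusion $K_h^k\subsetneq K_h^{k+1}$ is strict.

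The main obstacle is the descent estimate underpinning assertions 1 and 3, where care is needed on two counts: first, justifying that $x^k$ is an admissible test vector for the subproblem, which is precisely where the embedding $K_h^k\subseteq K_h$ enters; and second, applying strong monotonicity to the correct pair of operator values, $\zeta^k\in F^k(x^k)$ and $z_s^{k+1}\in F^k(y^{k+1})$, rather than to the underlying map $F$. Once these two points are secured, the remaining arguments are immediate.
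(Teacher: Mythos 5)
Your proof is correct and follows essentially the same route as the paper's: item 2 via the KKT inclusion \eqref{kkt1} read as $-\zeta^k\in\mathcal{N}_{K_h^k}(x^k)$, item 1 by testing the subproblem VI \eqref{subp} at $x^k$ and applying monotonicity of $F^k$ to the pair $\zeta^k\in F^k(x^k)$, $z_s^{k+1}\in F^k(y^{k+1})$, item 3 from strong monotonicity, and item 4 by contraposition from item 2. The only (harmless) variation is that you invoke the strong-monotonicity modulus $c$ already in item 1 to get the quantitative bound $\GAP(y^{k+1})\le -c\|y^{k+1}-x^k\|^2$ — an estimate the paper only derives later, as \eqref{GAPbound} in the proof of Theorem~\ref{principal} — which then makes item 3 immediate.
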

\begin{proof}
Let $\left(y^{k+1},z_s^{k+1}\in F^k(y^{k+1})\right)$ be the pair 
solving the VI subproblem.  
The first inclusion from \eqref{atxk} states
that \(\myphi^k=z_m^k+\Gamma^k(x^k)\mu^k\in F^k(x^k) \), so
\begin{align}\label{strict}
z_s^{k+1}\in F^k(y^{k+1})\mbox{ and }
\myphi^k\in F^k(x^k) \Longrightarrow
\left< z_s^{k+1}-\myphi^k ,y^{k+1} -x^k\right> \geq  0\,,
\end{align}
by the monotonicity of $F^k$.                         
Recalling the gap definition \eqref{gap}, we obtain that
\[ \GAP(y^{k+1})= \left< \myphi^k ,y^{k+1}-x^k\right>\leq
\left< z_s^{k+1},y^{k+1}-x^k\right> \,.  \]
Because $x^k \in K_h$, 
the inequality in the first line in \eqref{subp} yields
\( \left< z_s^{k+1},x^k-y^{k+1}\right> \geq 0\).
We conlude that
\begin{equation}\label{gap-y} \GAP(y^{k+1})\leq
\left< z_s^{k+1},y^{k+1}-x^k\right> \leq0 ,  \end{equation}
proving item \ref{le-gap}.

To show item \ref{ge-gap},
let $\nu^{k} \in \mathcal{N}_{K_h^k}(x^{k})$ be the normal
element in Theorem~\ref{kktqviset} 
that makes the inclusion \eqref{kkt1} an equality:
\[ 0=z_m^{k}+\sum\limits_{i=1}^m\mu_i^{k}\nabla_yg_i(x^{k},x^{k})+\nu^{k}
=\myphi^k+\nu^k
\mbox{, with  }
\left< \nu^k,y-x^k \right> \leq 0\mbox{ for all }y\in K_h^k\,. 
\]
By definition \eqref{gap}, we have that
\( \GAP(y)=-\left<\nu^{k},y-x^k\right> \mbox{ for all }y\in\mathbb{R}^n\,.  \)
In particular, 
\[ \GAP(y)=-
\left< \nu^k,y-x^k \right> \geq 0\mbox{ for all }y\in K_h^k\,, 
\]
which proves item \ref{ge-gap}.

To continue with item \ref{z-gap}, suppose that
\(x^k=y^{k+1}\).
 Then 
$\GAP(y^{k+1})=\left<\myphi^k,y^{k+1}-x^k\right>=0$,
 recalling once again the gap definition in \eqref{gap}.
Conversely, when $\GAP(y^{k+1})= \left< \myphi^k ,y^{k+1}-x^k\right> =0$, we see from
\eqref{gap-y} that 
\( \left< z_s^{k+1},y^{k+1}-x^k\right> =0\).
In \eqref{strict} when have that
\( \left< z_s^{k+1}-\myphi^k ,y^{k+1}-x^k\right> =  0\),
which implies that   
$ x^k=y^{k+1}$, by strong monotonicity of $F^k$.

 For the final item,
for the sake of contradiction, assume that $y^{k+1}\in K_h^k$.
Then, by item \ref{ge-gap}, $\GAP(y^{k+1})\ge 0$.
This contradicts the hypothesis $\GAP(y^{k+1})<0$, concluding the proof
of item \ref{n-gap}.
\end{proof}

We can now show convergence when
the method terminates finitely.
Note that according to Lemma~\ref{gap-lemma} we have that 
$\GAP^{k}\le 0$  for all $k$, and therefore, 
Algorithm~\ref{algo1} stopping finitely at some iteration $k$  means
that $\GAP^{k+1}=0$ is computed.

\vspace{1em}
\begin{corollary}[Finite termination]\label{MsolvesVI}
If at some iteration $k$ of Algorithm~\ref{algo1}, 
$\GAP^{k+1}=0$ is computed, then the algorithm stops.
In this case, 
the pair 
 $\left(x^k,\widehat{\myphi}^k\right)$, with
$x^k$ solving the master problem \eqref{master}
and $\widehat{\myphi}^k$ from \eqref{atxk},
solves the original QVI,
given by \eqref{qVI} and \eqref{Kx}.
\end{corollary}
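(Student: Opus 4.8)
The plan is to first dispatch the finite-termination claim, which is immediate from the structure of Algorithm~\ref{algo1}: the \textsc{while} loop tests $\GAP^k<0$, so once $\GAP^{k+1}=0$ is computed this condition fails and the loop terminates. The substantive assertion is that the returned pair solves \eqref{qVI}--\eqref{Kx}, and I would organize its verification around the three defining requirements of a QVI solution: primal feasibility $x^k\in K_g(x^k)\cap K_h$, the inclusion $\widehat\myphi^k\in F(x^k)$, and the variational inequality $\langle\widehat\myphi^k,\,y-x^k\rangle\geq0$ for all $y\in K_g(x^k)\cap K_h$.

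The key enabling fact, which I would establish at the outset, is that $\GAP(y^{k+1})=0$ forces $x^k=y^{k+1}$ by item~\ref{z-gap} of Lemma~\ref{gap-lemma}. This collapses the master and subproblem iterates to a single point and, crucially, makes the regularization term vanish, since $Q^k(y^{k+1}-x^k)=0$. Feasibility is then the easy part: the master \eqref{master} already gives $x^k\in K_g(x^k)\cap K_h^k$, and since each $y^i\in K_h$ (the starting point by assumption, the later ones by the subproblem constraint in \eqref{subp}) and $K_h$ is convex, we have $K_h^k=\conv Y^k\subseteq K_h$, whence $x^k\in K_g(x^k)\cap K_h$. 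For the inclusion, I would use $\widehat\myphi^k\in\widehat F^k(y^{k+1})=\widehat F^k(x^k)$ from \eqref{atxk} together with the observation that every variant in \eqref{aprox} satisfies $\widehat F^k(x^k)\subseteq F(x^k)$ (indeed $\widehat F^k(x^k)=\{z_m^k\}\subseteq F(x^k)$ in the constant case, and $\widehat F^k(x^k)=F(x^k)$ in the first-order and exact cases), giving $\widehat\myphi^k\in F(x^k)$.

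The core of the argument is the variational inequality, and this is where I expect the main work. Using \eqref{atxk} with $y^{k+1}=x^k$, one has $z_s^{k+1}=\widehat\myphi^k+\Gamma^k(x^k)\mu^k$, and $\Gamma^k(x^k)\mu^k=\sum_{i=1}^m\mu_i^k\nabla_yg_i(x^k,x^k)$ holds for every option in \eqref{aprox-grad}, since at $y=x^k$ all three options coincide. The subproblem inequality in \eqref{subp}, now read at $y^{k+1}=x^k$, then rearranges to
\begin{align*}
\langle\widehat\myphi^k,\,y-x^k\rangle\ \geq\ -\sum_{i=1}^m\mu_i^k\langle\nabla_yg_i(x^k,x^k),\,y-x^k\rangle\qquad\text{for all }y\in K_h.
\end{align*}
It remains to show that the right-hand side is nonnegative whenever $y\in K_g(x^k)$. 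This is the step where the relaxed $g$-constraints are recovered: for each $i$, convexity of $g_i(\cdot,x^k)$ gives $\langle\nabla_yg_i(x^k,x^k),y-x^k\rangle\leq g_i(y,x^k)-g_i(x^k,x^k)$; multiplying by $\mu_i^k\geq0$, invoking complementarity $\mu_i^k g_i(x^k,x^k)=0$ from \eqref{kkt2}, and using $g_i(y,x^k)\leq0$ for $y\in K_g(x^k)$ yields $\mu_i^k\langle\nabla_yg_i(x^k,x^k),y-x^k\rangle\leq0$. Summing over $i$ shows the right-hand side is nonnegative, so $\langle\widehat\myphi^k,\,y-x^k\rangle\geq0$ for all $y\in K_g(x^k)\cap K_h$, which is exactly the QVI condition. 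The main obstacle, conceptually, is precisely this last passage: the subproblem only certifies optimality over $K_h$, and one must use the master's KKT multiplier $\mu^k$ together with the convexity and complementarity of the $g$-constraints to upgrade this to the genuinely smaller, parameter-dependent set $K_g(x^k)\cap K_h$. Everything else is bookkeeping that reduces, thanks to $x^k=y^{k+1}$, to evaluating the approximations $\widehat F^k$, $\Gamma^k$, and $Q^k$ at a single point.
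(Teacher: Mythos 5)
Your proposal is correct and follows essentially the same route as the paper's proof: collapse $x^k=y^{k+1}$ via item~\ref{z-gap} of Lemma~\ref{gap-lemma}, obtain feasibility and the inclusion $\widehat\myphi^k\in F(x^k)$ from \eqref{atxk} and \eqref{aprox}, and then upgrade the subproblem inequality over $K_h$ to the QVI inequality over $K_g(x^k)\cap K_h$ using convexity of $g_i(\cdot,x^k)$, $\mu_i^k\geq0$, and the complementarity condition \eqref{kkt2}. The only cosmetic difference is your justification of $x^k\in K_h$ via $\conv Y^k\subseteq K_h$ rather than via $x^k=y^{k+1}\in K_h$ directly from \eqref{subp}, which is equally valid.
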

\begin{proof}
By item~\ref{z-gap} in Lemma~\ref{gap-lemma}, having
$\GAP(y^{k+1})=0$ is equivalent to $x^k = y^{k+1}$.
In particular, 
\[\begin{array}{ll}
x^k\in K_g(x^k)\cap K_h&\mbox{because $x^k$ solves \eqref{master} and \eqref{subp}, and}\\ 
\widehat{\myphi}^k\in F(x^k) &
\mbox{because
$\widehat\myphi^k\in\widehat F^k(y^{k+1})=
\widehat F^k(x^k)\subset F(x^k)$ for any choice in \eqref{aprox}.}
\end{array}\]
 To verify that the pair $\left(x^k,\widehat{\myphi}^k\in F(x^k)\right)$
solves \eqref{qVI}-\eqref{Kx}, it only remains
to show that
\begin{equation}\label{remain}
\left< \widehat{\myphi}^{k}, y-x^{k} \right> \geq 0\\
 \mbox{ for all }y \in K_g(x^{k})\cap K_h\,.
\end{equation}
Since 
$x^k$ solves the VI subproblem \eqref{subp}, it holds that
\[\left< z_s^{k+1}, y-x^k \right> \geq 0\; \mbox{ for all $y \in K_h$, in particular for all }
y \in K_g(x^{k})\cap K_h\,.\]
Then, as
\(z^{k+1}_s=\widehat\myphi^k+\Gamma^k(x^k)\mu^k\) 
by the definition of $\widehat{\myphi}^k$ in \eqref{atxk},
it holds that
\[\left< \widehat\myphi^k+\Gamma^k(x^k)\mu^k
, y-x^k \right> \geq 0\; \mbox{ for all }
y \in K_g(x^{k})\cap K_h\,.\]
The inequality \eqref{remain} will hold if
\begin{equation}\label{sol-qvi}
\left< \Gamma^k(x^k)\mu^k
, y-x^k \right> \leq 0\; \mbox{ for all }
y \in K_g(x^{k})\cap K_h\,.
\end{equation}
First recall that $\Gamma^k(x^k)\mu^k=\sum\limits_{i=1}^m \mu_i^k\nabla_yg_i(x^k,x^k)$.
Second, note that
because the functions $g_i(\cdot,x^k)$ are convex,
  for all $y \in\mathbb R^n$ we have that
\[\left< \nabla_y g_i(x^k,x^k), y-x^k\right> \leq
g_i(y,x^k)-g_i(x^k,x^k) \,.\]
Then multiplying this inequality by $\mu^k_i\geq 0$ 
and using the condition $\mu_i^kg_i(x^k,x^k)=0$ from
\eqref{kkt2} in Theorem~\ref{kktqviset} 
yields, for any \(y \in K_g(x^{k})\cap K_h\), that
\[\mu^k_i\left< \nabla_y g_i(x^k,x^k), y-x^k\right> \leq
\mu^k_ig_i(y,x^k)-\mu^k_ig_i(x^k,x^k)=
\mu^k_ig_i(y,x^k) \,.\]
Since $y\in K_g(x^k)$, the right-hand side in the relation above is nonpositive.
Summing up these inequalities for $i=1,\ldots,m$ yields \eqref{sol-qvi}, as claimed.
\end{proof}

\subsection{Asymptotic convergence of Algorithm~\ref{algo1}}
Corollary~\ref{MsolvesVI} states that
whenever the master QVI solution is also a solution to the VI subproblem, 
the algorithm stops having found a solution to the original QVI.
Otherwise, 
the iterative process continues and,
by item~\ref{n-gap} in Lemma~\ref{gap-lemma},
the algorithm makes progress by defining a larger
feasible set for the master QVI \eqref{master},
better approximating the original problem.
To complete our convergence analysis, it remains to consider
the asymptotic behavior of Algorithm~\ref{algo1}. Since
now we are dealing with an infinite number of iterations, 
the strong monotonicity of the approximations discussed in
Remark~\ref{monotonicity} needs to be ensured in a
uniform manner, for all iterates.	
Before stating the result, 
we note that
existence results about QVI solutions usually require 
some kind of monotonicity or continuity of $F$, assumptions
also used below.

\vspace{1em}

\begin{theorem}\label{principal}
Suppose that the operator $F$ in \eqref{qVI} 
is either continuous and single-valued  or
 maximally monotone set-valued, and that 
the function $g$ in \eqref{Kx} has a continuous gradient
$\nabla_y g (\cdot, \cdot)$.
 Let the VI-operator approximations $F^k$ in \eqref{subp} be defined
to be uniformly strongly monotone with 
parameter $c>0$, and let the matrices $\{Q^{k}\}$ be taken bounded. 
Finally, suppose that if Algorithm~\ref{algo1} 
generates an infinite number of itarations, 
the sequences $\{\mu^k\}$ and $\{y^{k+1}\}$ are bounded.

Then,
 $$\lim\limits_{k\to\infty}\GAP(y^{k+1}) = 0, \quad \lim\limits_{k\to \infty} \|y^{k+1}-x^k\| = 0,$$ 
and every accumulation point of $\left\{\left(x^k,\widehat\myphi^k\right)\right\}$ is a 
solution to QVI \eqref{qVI}-\eqref{Kx}.
\end{theorem}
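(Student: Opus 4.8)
The plan is to sharpen the per-iteration estimate of Lemma~\ref{gap-lemma} using uniform strong monotonicity, then force the gap to vanish by a separation argument, and finally pass to the limit in the subproblem VI \eqref{subp} to recover \eqref{qVI}. Starting from \eqref{strict}, uniform strong monotonicity upgrades the monotonicity inequality to $\langle z_s^{k+1}-\myphi^k,\,y^{k+1}-x^k\rangle \ge c\|y^{k+1}-x^k\|^2$. Combining this with $\GAP(y^{k+1})=\langle \myphi^k,\,y^{k+1}-x^k\rangle$ and with $\langle z_s^{k+1},\,y^{k+1}-x^k\rangle\le 0$ from \eqref{gap-y}, I would record the strengthened bound
\[ \GAP(y^{k+1}) \le \langle z_s^{k+1},\,y^{k+1}-x^k\rangle - c\|y^{k+1}-x^k\|^2 \le -c\|y^{k+1}-x^k\|^2 \le 0. \]
Since also $\GAP(y^{k+1})=\langle \myphi^k,\,y^{k+1}-x^k\rangle$, the two asserted limits are equivalent once $\{\myphi^k\}$ is bounded: $\GAP(y^{k+1})\to 0$ gives $\|y^{k+1}-x^k\|\to 0$ through the displayed bound, and conversely.

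I would next verify that $\{\myphi^k\}$ is bounded. From $x^k\in K_h^k=\conv Y^k$ and boundedness of $\{y^{k+1}\}$, the iterates $\{x^k\}$ lie in a fixed bounded set; hence $\{z_m^k\}$ is bounded (by continuity when $F$ is single-valued, and by local boundedness of a maximally monotone $F$ on the compact closure of $\{x^k\}\subset K_h\subset\inte(\dom F)$ in the set-valued case), while $\Gamma^k(x^k)\mu^k=\sum_i\mu_i^k\nabla_yg_i(x^k,x^k)$ is bounded by continuity of $\nabla_yg$ and boundedness of $\{\mu^k\}$. The crux of the theorem, and the step I expect to be the main obstacle, is to prove $\GAP(y^{k+1})\to 0$. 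I would argue by contradiction: if not, there exist $\epsilon>0$ and a subsequence along which $\GAP(y^{k+1})\le-\epsilon$. By item~\ref{ge-gap} of Lemma~\ref{gap-lemma}, every earlier point $y^j\in K_h^k$ ($j\le k$) satisfies $\langle\myphi^k,\,y^j-x^k\rangle=\GAP(y^j)\ge0$, so subtracting gives $\langle\myphi^k,\,y^{k+1}-y^j\rangle\le-\epsilon$ for all $j\le k$, whence $\|y^{k+1}-y^j\|\ge\epsilon/M$ by Cauchy--Schwarz, where $M$ bounds $\|\myphi^k\|$. Along the subsequence this yields infinitely many points that are pairwise $\epsilon/M$-separated yet contained in a bounded set, contradicting Bolzano--Weierstrass. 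Thus $\GAP(y^{k+1})\to0$, and $\|y^{k+1}-x^k\|\to0$ follows.

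For the accumulation-point claim, let $(x^*,\widehat\myphi^*)$ be a limit of $\{(x^k,\widehat\myphi^k)\}$ along some $\mathcal K$; then $y^{k+1}\to x^*$ on $\mathcal K$ since $\|y^{k+1}-x^k\|\to0$, and after a further extraction I may assume $\mu^k\to\mu^*\ge0$. Feasibility $x^*\in K_g(x^*)\cap K_h$ follows by continuity, using $x^k\in K_h$ (closed) and $g(x^k,x^k)\le0$ from \eqref{kkt2}. To get $\widehat\myphi^*\in F(x^*)$ I would check each variant of \eqref{aprox}: in the continuous single-valued case every option gives $\widehat\myphi^k\to F(x^*)$ (using $x^k,y^{k+1}\to x^*$ and, for the first-order option, continuity of $\nabla F$ on the compact closure of $\{x^k\}$); in the maximally monotone case the constant and exact options give $\widehat\myphi^k\in F(x^k)$ or $\widehat\myphi^k\in F(y^{k+1})$, so outer semicontinuity of $F$ yields $\widehat\myphi^*\in F(x^*)$.

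Finally, rewriting the subproblem VI \eqref{subp} via \eqref{atxk} as $\langle \widehat\myphi^k+\Gamma^k(y^{k+1})\mu^k+Q^k(y^{k+1}-x^k),\,y-y^{k+1}\rangle\ge0$ for all $y\in K_h$, I would pass to the limit on $\mathcal K$: the $Q^k$-term vanishes because $\{Q^k\}$ is bounded and $y^{k+1}-x^k\to0$, and $\Gamma^k(y^{k+1})\mu^k\to\sum_i\mu_i^*\nabla_yg_i(x^*,x^*)$ for every option in \eqref{aprox-grad}, since $\nabla_yg$ is continuous and both of its arguments tend to $x^*$. This produces
\[ \langle \widehat\myphi^*+\textstyle\sum_{i=1}^m\mu_i^*\nabla_yg_i(x^*,x^*),\,y-x^*\rangle\ge0 \quad\text{for all } y\in K_h. \]
Restricting to $y\in K_g(x^*)\cap K_h$ and repeating verbatim the convexity-plus-complementarity estimate from the proof of Corollary~\ref{MsolvesVI}, now with the limiting multiplier $\mu^*$ (using $\mu_i^*g_i(x^*,x^*)=0$ and $g_i(y,x^*)\le0$), gives $\langle\sum_i\mu_i^*\nabla_yg_i(x^*,x^*),\,y-x^*\rangle\le0$; subtracting establishes $\langle\widehat\myphi^*,\,y-x^*\rangle\ge0$ for all $y\in K_g(x^*)\cap K_h$, so $(x^*,\widehat\myphi^*)$ solves \eqref{qVI}--\eqref{Kx}.
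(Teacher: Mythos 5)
Your proof is correct, and its central step is genuinely different from the paper's. Both arguments establish the bound $-\GAP(y^{k+1})\ge c\|y^{k+1}-x^k\|^2$ from uniform strong monotonicity (the paper's \eqref{GAPbound}), and both handle the accumulation-point stage the same way: pass to the limit in the subproblem VI rewritten via \eqref{atxk}, then use convexity plus the complementarity condition \eqref{kkt2} exactly as in Corollary~\ref{MsolvesVI}. Where you diverge is in proving $\GAP(y^{k+1})\to 0$. The paper argues by contradiction using the master's KKT conditions at a \emph{later} iteration $j>k$: since $y^{k+1}\in\conv Y^j$, the normal-cone element in \eqref{kkt1} yields the inequality \eqref{ineqj}, and a double limit (first $j\to\infty$, then $k\to\infty$, extracting convergent subsequences of $x^j$, $\mu^j$, $z_m^j$, $y^{k+1}$, and invoking continuity or outer-semicontinuity of $F$ already at this stage) produces two contradictory inequalities. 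You instead use only item~\ref{ge-gap} of Lemma~\ref{gap-lemma} (gap nonnegative on $K_h^k$, which encodes the same KKT information but at the \emph{current} iteration), a uniform bound $M$ on $\|\myphi^k\|$, and Cauchy--Schwarz to conclude that each $y^{k+1}$ with $\GAP(y^{k+1})\le-\varepsilon$ is at distance at least $\varepsilon/M$ from \emph{all} previous iterates $y^j$; pairwise $(\varepsilon/M)$-separation of infinitely many points in a bounded set then contradicts Bolzano--Weierstrass. This is the classical cutting-type argument, and it is more elementary: it avoids the double limit and defers all continuity/outer-semicontinuity considerations for $F$ to the final accumulation-point step, where they are needed anyway. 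Neither route saves hypotheses (your bound on $\myphi^k$ uses the same boundedness of $\{\mu^k\}$, $\{x^k\}$ and local boundedness of maximally monotone $F$ on $K_h\subset\inte(\dom(F))$ that the paper uses), but your version isolates the combinatorial mechanism by which the growing sets $K_h^k$ force convergence, which the paper's double-limit argument leaves implicit.
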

\begin{proof}
By Lemma~\ref{gap-lemma} and Corollary~\ref{MsolvesVI},
for Algorithm~\ref{algo1} to make an infinite number of iterations,
it must hold that $\GAP(y^{k+1})<0$ for all $k$. 
Suppose the claim were not true, i.e.,
$\liminf_{k\to\infty}\GAP(y^{k+1}) < 0$. Then there
 exist $\varepsilon > 0$ and an infinite subset of iterations $\mathbb{N}_\varepsilon$, such that
$\GAP(y^{k+1})\leq-\varepsilon$ for all $k\in\mathbb{N}_\varepsilon$. 
Recalling \eqref{gap}, for this subsequence it holds that
\begin{align}
\label{gapep}
\left< z_m^{k} +\sum\limits_{i=1}^m\mu_i^{k}\nabla_yg_i(x^{k},x^{k}),y^{k+1}-x^{k}\right> 
\leq-\varepsilon\,.
\end{align}
Consider $k,j\in \mathbb{N}_\varepsilon$, with  ${j}>k$.
As $x^{j}$ solves the QVI master \eqref{master} at iteration $j$,
by \eqref{kkt1} in Theorem~\ref{kktqviset} there exists
$\nu^{j} \in \mathcal{N}_{\conv{Y^{{j}}}}(x^{j})$ such that
\begin{equation}
\label{auxkkt1}
0=z_m^{j}+\sum\limits_{i=1}^m\mu_i^{j}\nabla_yg_i(x^{j},x^{j})+\nu^{j}.
\end{equation}
By the construction of \eqref{master}, and since ${j}>k$, we have that $y^{k+1}\in \conv{Y^{{j}}}$, and hence,
\[\left< \nu^{j},y^{k+1}-x^{j}\right> \leq 0 .\]
Combining the latter relation with \eqref{auxkkt1}, we obtain that
\begin{equation}
\label{ineqj}
\left< z_m^{j}+\sum\limits_{i=1}^m\mu_i^{j}\nabla_yg_i(x^{j},x^{j}),y^{k+1}-x^{j}\right> \ge 0. 
\end{equation}

As $\{y^k\}$ is bounded, the sets $Y^k$ are uniformly bounded in $k$,
and hence so are the sets $K_g(\cdot)\cap K_h^k$ in \eqref{master}.
It follows that the sequence $\{x^k\}$ is bounded.
Let $\bar{x}$ be any accumulation point of the (bounded) subsequence
$\{x^{j}\}$, ${j}\in \mathbb{N}_\varepsilon$. Without ambiguity for further developments,
we can assume that the whole $\{x^{j}\}$, ${j}\in \mathbb{N}_\varepsilon$, converges to
some $\bar{x}$ (otherwise, just pass onto a subsequence within $\mathbb{N}_\varepsilon$,
and re-define $\mathbb{N}_\varepsilon$). 
We can then assume that $\{\mu^{j}\}$, ${j}\in \mathbb{N}_\varepsilon$, converges to some $\bar{\mu}$
(again, passing onto a further subsequence, if necessary).

If $F$ is  continuous single-valued, then
$z_m^{j} = F(x^{j})\to F(\bar{x}) = \bar{z}_m$, ${j} \in \mathbb{N}_\varepsilon$. 
If $F$ is a maximally monotone set-valued mapping, then
 it is locally bounded in $K_h$ because $K_h \subset \inte(\dom(F))$, and it is  
also outer-semicontinuous.
Then, again passing onto a further subsequence if necessary,
we can assume that
$z_m^{j} \to\bar{z}_m \in F(\bar{x}), {j} \in \mathbb{N}_\varepsilon.$ 

Then, taking the limit ${j}\to\infty, {j} \in \mathbb{N}_\varepsilon$, in \eqref{ineqj},
we conclude that
\begin{equation}
\label{subbar}
\left< \bar{z}_m+\sum\limits_{i=1}^m\bar{\mu}_i\nabla_yg_i(\bar{x},\bar{x}),
y^{k+1}-\bar{x}\right>\geq0, \quad \bar{z}_m \in F(\bar{x}) .
\end{equation}

Again, passing onto a subsequence if necessary, we can assume
that $\{y^{k+1}\}$, $k\in \mathbb{N}_\varepsilon$, converges to some $\bar{y}$.
Taking this limit in \eqref{gapep}, we obtain that
\[ \left<
\bar{z}_m+\sum\limits_{i=1}^m
\bar{\mu}_i\nabla_yg_i(\bar{x},\bar{x}),\bar{y}-\bar{x}\right>\leq-\varepsilon ,
\]
while taking the same limit in \eqref{subbar} yields
\[\left<
\bar{z}_m+\sum\limits_{i=1}^m
\bar{\mu}_i\nabla_yg_i(\bar{x},\bar{x}),\bar{y}-\bar{x}\right>\geq0.
\]           
This contradiction shows that $\liminf_{k\to\infty} \GAP(y^{k+1}) \ge 0$,
which means that $\lim_{k\to\infty} \GAP(y^{k+1}) = 0$, as $\GAP(y^{k+1})< 0$ for all $k$.

Since $y^{k+1}$ solves the VI subproblem \eqref{subp} at iteration $k$ and 
$x^k \in K_h$, it holds that  
\begin{align}
\label{desifor6e7}
\left< z_s^{k+1},x^k-y^{k+1}\right> \geq 0,
\end{align}
with $z_s^{k+1}\in F^k(y^{k+1})$ by \eqref{subp}.
As $F^{k}$ is uniformly strongly monotone 
(with modulus $c$), 
\[\left<
\myphi^k -z_s^{k+1} ,x^k-y^{k+1}\right> \geq c\, \|x^k-y^{k+1}\|^2, \,\]
because \( \myphi^k \in F^{k}(x^k) \text{ by \eqref{atxk}}.
\)
As a result, recalling \eqref{atxk}, we have that
\begin{equation}
    \label{strongVI}
\left<
z_m^k+\Gamma^k(x^k)\mu^k-z_s^{k+1},
 x^k-y^{k+1}\right> \geq c\, \|x^k-y^{k+1}\|^2 .
\end{equation}
Combining \eqref{strongVI} and \eqref{desifor6e7}, and recalling \eqref{gap}, 
we obtain that
\begin{align}
    \label{GAPbound}
-\GAP(y^{k+1}) \ge
c\, \|x^k-y^{k+1}\|^2.
\end{align}
Since $\GAP(y^{k+1})\to 0$ as $k\to\infty$ (as established previously), it follows that
$\|x^k-y^{k+1}\|\to 0$ as $k\to\infty$.

Let $(\bar{x},\bar{\myphi})$ be an accumulation point of $\{(x^k,\widehat\myphi^k)\}$.  
Since $x^k \in K_g(x^k)\cap K_h^k \subset K(x^k)$, and $g$ and $h$ are continuous, we have $\bar{x} \in K(\bar{x})$.
We can take a subsequence $\{k_j\}$ such that $x^{k_j}\to\bar{x}$, 
$\omega^{k_j} \to \bar{\omega}$, $\mu^{k_j} \to \bar{\mu}$ and 
$Q^{k_j}\to\bar{Q}$, as $j \to \infty$.

 As          
$$\|\bar{x}-y^{k_j+1}\|\leq\|\bar{x}-x^{k_j}\|+\|x^{k_j}-y^{k_j+1}\|,$$
taking the limit as $j \to \infty$, we obtain that
 $y^{k_j+1}\to \bar{x}$.

Let $y \in K(\bar{x})$ be arbitrary. It holds that $y \in K_h$.
As $(y^{k_j+1},z_s^{k_j+1}) \in K_h \times F^{k_j}(y^{k_j+1})$ solves 
the VI subproblem \eqref{subp} at iteration $k_j$, we have that
\begin{align} 
\left< z_s^{k_j+1},y-y^{k_j+1}\right> \geq 0\; \mbox{ and }\;
z_s^{k_j+1}\in F^{k_j}(y^{k_j+1}) .
\end{align}
Recalling the relation 
$\widehat\myphi^{k_j} = z_s^{k_j+1}-\Gamma^{k_j}(y^{k_j+1})\mu^{k_j}-Q^{k_j}(y^{k_j+1}-x^{k_j})$
from \eqref{atxk}, we obtain that
\begin{align}
\label{ineqmyphi}
\langle\widehat\myphi^{k_j}+
\Gamma^{k_j}(y^{k_j+1})\mu^{k_j}+Q^{k_j}(y^{k_j+1}-x^{k_j}),y-y^{k_j+1}\rangle \geq 0 .
\end{align}
Note that $\Gamma^{k_j}(y^{k_j+1})\mu^{k_j}$ tends to
$\sum\limits_{i=1}^m \bar{\mu}_i\nabla_yg_i(\bar{x},\bar{x})$ for any choice 
in \eqref{aprox-grad}, and $Q^{k_j}(y^{k_j+1}-x^{k_j})$ tends to zero,
as $j\to\infty$.
If $F$ is a continuous single-valued map, then for any choice of
$\widehat{F}^{k}$ in \eqref{aprox}, $\widehat\myphi^{k_j}=\widehat{F}^{k_j}(y^{k_j+1})$ tends to $\bar{\myphi} = F(\bar{x})$.  For the set-valued case, since $F$ is maximally monotone, 
it is outer semicontinuous and locally bounded. Then any accumulation point of any elements
in $\{\widehat{F}^{k_j}(y^{k_j+1})\}$ belongs to $F(\bar{x})$.
In either case, passing onto a subsequence in the second case if necessary, 
we have $\widehat\myphi^{k_j+1} \to \bar{\myphi} \in F(\bar{x})$. 
Taking the limit in \eqref{ineqmyphi}, we obtain that
$$\left< \bar{\myphi}+\sum\limits_{i=1}^m \bar{\mu}_i\nabla_yg_i(\bar{x},\bar{x}),
y-\bar{x}\right> \geq 0,  \text{ with }\bar{\myphi}\in F(\bar{x}).$$
Because 
$\bar{\mu}_i \ge 0$ and
$g_i(y,\bar{x})-g_i(\bar{x},\bar{x}) \ge 
\left<\nabla_yg_i(\bar{x},\bar{x}),y-\bar{x}\right>$
by convexity of $g(\cdot,\bar{x})$,  we have that 
\begin{eqnarray}\label{almost}
\left< \bar{\myphi},y-\bar{x}\right> &\geq&
 -\sum\limits_{i=1}^m \bar{\mu}_i\left<\nabla_yg_i(\bar{x},\bar{x}), 
y-\bar{x}\right> \nonumber \\
&\geq& -\sum\limits_{i=1}^m \bar{\mu}_i(g_i(y,\bar{x})-g_i(\bar{x},\bar{x})).
\end{eqnarray}
 By condition \eqref{kkt2} in Theorem~\ref{kktqviset}, 
$\mu_i^{k_j}g_i(x^{k_j},x^{k_j})=0$ and, hence,
 $\bar{\mu}_ig_i(\bar{x},\bar{x})=0$. 
Also, $\bar{\mu}_i\ge 0$ and, since $y \in K(\bar{x})$,  $\bar{\mu}_ig_i(y,\bar{x})\leq0$. 
From \eqref{almost} it follows that
$\left< \bar{\myphi}, y-\bar{x} \right> \geq 0,$ which shows
that $(\bar{x},\bar{\myphi}) \in K(\bar{x})\times F(\bar{x})$ solves  \eqref{qVI}-\eqref{Kx}, as stated.
\end{proof}

We next discuss
an important class of problems with
a certain block-separable structure.

\section{Block-Separable Approximations}\label{jacobi}

In many applications, including the one considered in Subsection~\ref{ss-walras},
the subproblem decision vector $y\in\mathbb R^n$ can be split into subvectors,
say, \[y=\prod\limits_{a\in A} y_a 
\mbox{ for }y_a\in \mathbb{R}^{n_a}\mbox{ and }\sum_{a\in A}n_a=n\,,\]
according to decomposable structures observed in the set $K_h$ in \eqref{Kx}.
Specifically, when
\[K_h=\prod\limits_{a\in A}K_{h_a}\mbox{ for }K_{h_a}=
\left\{y_a\in \mathbb{R}^{n_a}: h_a(y_a)\leq 0\right\}\,,\]
then the VI operator in subproblem \eqref{subp} 
\[F^{k}(y)=\widehat{F}^k(y)+\Gamma^k(y)\mu^k +Q^{k}(y-x^k)\,,\mbox{
for $\widehat F^k(y)$ from \eqref{aprox} and $\Gamma^k(y)\mu^k$ from \eqref{aprox-grad}},\] 
can be further decomposed as the product of VI-operators of smaller
dimensions:
\[F^{k}(y)= \prod\limits_{a\in A} \mathcal F^k_a(y_a)\mbox{ where }y=\prod\limits_{a\in A} y_a \,.\]
This is achieved by means of a Jacobi-like approach, similar to the one in \cite{Luna_2012}.
The process starts rearranging the $n$ components of $F^k$
according to the block structure:
\[F^k(y)=\prod\limits_{a\in A} F^k_a(y)\mbox{ where }y\in\mathbb R^n\,.\]
Then, each block $F^k_a(y)$ is approximated by an operator depending only on the $a$th subvector $y_a \in \mathbb{R}^{n_a}$,
fixing the remaining components to those of a known vector, for instance $x^k\in\mathbb R^n$:
\begin{equation}\label{fgj} 
F^k_a(y)\approx \mathcal F^k_{a}(y_a)= F^k_a(y_a,x^k_{-a})\quad\mbox{ where }
y_a\in\mathbb R^{n_a}\mbox{ and }
x^k_{-a}=\prod\limits_{a\neq j\in A} x^k_j \in\mathbb R^{n-n_a}\,.
\end{equation}
With this approximation,
\eqref{subp}  amounts to solving separate VI subproblems, each one of dimension $n_a$:
\begin{equation}\label{subpJ} \begin{array}{rlll}
\mbox{for }a\in A&\mbox{ find }(y_a^{k+1},z_{s_a}^{k+1})&\mbox{such that}
&y_a^{k+1} \in K_{h_a}\mbox{ and } z_{s_a}^{k+1} \in \mathcal F_a^k(y_a^{k+1}) \\
&&
\mbox{satisfy}
&
\left< z_{s_a}^{k+1}, y_a-y_a^{k+1} \right> \geq 0\; \mbox{ for all }y_a \in K_{h_a}\\
&& \mbox{where}&
\mathcal F_a^{k}(y_a)
\mbox{ is given in \eqref{fgj}.}\\
\end{array}
\end{equation}
To see that all the theoretical results in Section~\ref{convanalysis} remain valid
for this approxiomation,
notice first that evaluating each Jacobi block in \eqref{fgj} at $y_a=x^k_a$ gives the identity
\( F^k_a(x^k_a)= \mathcal F^k_a(x^k_a)\).
Taking the product over all the blocks,
\begin{equation}\label{eqs}
 F^k(x^k)= \mathcal F^k(x^k)
\mbox{ where }\mathcal F^k(y)=\prod\limits_{a\in A} \mathcal F^k_a(y_a)
\,.  \end{equation}
As the gap definition in \eqref{gap} remains the same, 
both Lemma~\ref{gap-lemma} and Corollary~\ref{MsolvesVI} hold.
Regarding the result on asymptotic convergence, a crucial property shown below is
that the Jacobi approximation preserves the monotonicity
of the original map. As a result, the (regularized) Jacobi approximation $\mathcal F^k$
can always be strongly monotone as long as $F$ is monotone.
Actually, Jacobi approximations need a weaker setting, referred 
to as block-wise monotonicity of $F$.
Specifically, monotonicity of the whole operator ${F}^k$ 
in Theorem~\ref{principal} can be replaced by requiring monotonicity
of the individual blocks $\mathcal{F}^k_a$.
This relaxed assumption can be useful in cases where $F$ is not monotone but has blocks which are monotone when
some components are fixed. Block-wise monotonicity occurs naturally, 
for example, in GNEPs. 

To simplify the notation, we illustrate these remarks for the single-valued maps only.

\begin{proposition}[Monotonicity properties of the Jacobi approximation] 
Consider Jacobi approximations as in \eqref{fgj}, defined for a single-valued operator $F^k$.

If for all $a\in A$ the Jacobi blocks 
$\mathcal F^k_{a}(y_a)$ are monotone with respect to $y_a\in\mathbb R^{n_a}$, then the full operator
$\mathcal F^k (y)=\prod\limits_{a\in A} \mathcal F^k_{a}(y_a)$ 
 is   monotone with respect to $y\in\mathbb R^n$ .
\end{proposition}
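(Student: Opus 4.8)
The plan is to exploit the block-diagonal structure of the Jacobi approximation and reduce the monotonicity of the full operator to that of its individual blocks. First I would fix two arbitrary points $y,y'\in\mathbb R^n$ and write them in block form, $y=\prod_{a\in A}y_a$ and $y'=\prod_{a\in A}y'_a$, with $y_a,y'_a\in\mathbb R^{n_a}$. The quantity to sign is $\left<\mathcal{F}^k(y)-\mathcal{F}^k(y'),y-y'\right>$, and the objective is to show that it is nonnegative.

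The key observation I would record explicitly is that, by the construction in \eqref{fgj}, the $a$th block $\mathcal{F}^k_a(y_a)=F^k_a(y_a,x^k_{-a})$ depends only on the subvector $y_a$, with all remaining components frozen at $x^k_{-a}$. Consequently the $a$th component of $\mathcal{F}^k(y)=\prod_{a\in A}\mathcal{F}^k_a(y_a)$ is a function of $y_a$ alone. Since the Euclidean inner product on $\mathbb R^n$ splits as a sum over the blocks, this produces the decomposition
$$\left<\mathcal{F}^k(y)-\mathcal{F}^k(y'),y-y'\right>=\sum_{a\in A}\left<\mathcal{F}^k_a(y_a)-\mathcal{F}^k_a(y'_a),y_a-y'_a\right>.$$
It then remains only to invoke the hypothesis: each block $\mathcal{F}^k_a$ is monotone in $y_a$, so every summand on the right-hand side is nonnegative, and hence so is the whole sum. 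This establishes monotonicity of $\mathcal{F}^k$.

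The main subtlety, rather than a genuine obstacle, lies in the decomposition step: one must be sure that the off-diagonal coupling is truly absent, which is exactly what the Jacobi freezing of the complementary components $x^k_{-a}$ guarantees. If each block were instead allowed to depend on the full vector $y$, the inner product would still split into a sum of block terms, but the $a$th term would read $\left<F^k_a(y)-F^k_a(y'),y_a-y'_a\right>$, and block-wise monotonicity alone would no longer suffice to sign it; removing that cross-dependence is precisely the role of the approximation \eqref{fgj}. Beyond making this point carefully, I expect no further complications, since the argument is purely algebraic once the block decomposition is in place.
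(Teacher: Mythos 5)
Your proof is correct and follows essentially the same route as the paper: both split the inner product $\left<\mathcal F^k(y)-\mathcal F^k(y'),y-y'\right>$ into the sum $\sum_{a\in A}\left<\mathcal F^k_a(y_a)-\mathcal F^k_a(y'_a),y_a-y'_a\right>$ over the blocks and then invoke the block-wise monotonicity hypothesis termwise. Your added remark on why the Jacobi freezing of $x^k_{-a}$ is exactly what removes the cross-block coupling is a nice clarification, but it is the same argument.
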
 \begin{proof} By definition of the full Jacobi approximation, we need to show that \[
\forall y, y'\in\mathbb{R}^{n},\quad 
\left< \mathcal F^k(y)-\mathcal F^k(y'),y-y'\right> =\sum_{a\in A}
\left< \mathcal F^k_{a}(y_a)-\mathcal F^k_{i}(y'_a),y_a-y'_a\right>\geq 0 \,.  \]
The result follows, because each term in the right hand side is non negative, by assumption.
\end{proof}

The statements in Theorem~\ref{principal} remain valid when replacing
the VI operator $F^k$ and subproblem \eqref{subp} by 
the Jacobi approximation $\mathcal F^k$ and \eqref{subpJ}. 
Most of the proof remains the same,
replacing throughout $F^{k}$ by $\mathcal F^k$
(for instance, using \eqref{eqs} 
in \eqref{strongVI} and \eqref{desifor6e7}, yields
\eqref{GAPbound}).

\section{Numerical Results}\label{numericalresults}
In order to assess the performance of our proposal, we apply Algorithm~\ref{algo1} to two
 well-known problems formulated as quasi-variational inequalities.
The first application, that aims at determining a stable state of equilibrium in an abstract economy, provides a good
setting to illustrate the interest of the Jacobi approximations presented in Section~\ref{jacobi}.
In the second application, an instance from \cite{QVILIBAL} called \emph{the moving-set problem}, 
we examine the impact of employing different approximations in \eqref{aprox-grad} in the VI subproblems \eqref{subp}.

The employed variants of the DW decomposition in each application
are compared against the direct solution of \eqref{qVI}-\eqref{Kx}, without decomposition. 
The two approaches are respectively referred to as {\sc dw} and {\sc direct}.

All of our runs were done on a computer running Ubuntu 22.04 with AMD Ryzen Threadripper 1950X processors, featuring 16 cores (32 threads) and 64GB of RAM.
The codes are available in \url{https://github.com/ManoelJardim/DWQVI/tree/main/DWQVI}.
The codes were written in Python, calling GAMS \cite{GAMS} and using its
Extended Mathematical Programming (EMP) tools.
The EMP tool was introduced in \cite{Kim_2019} to ease the GAMS formulation of 
specific equilibrium problems and their solution using PATH \cite{PATH}. 
We do not assign any parameters for GAMS and use its default values.

\subsection{Walrasian Equilibrium Problems}\label{ss-walras}
The model introduced in \cite{Arrow_1954}, stated here 
as in \cite{Facchinei_2010,Facchinei_2013}, is a generalized
Nash game involving firms, consumers, and market players. 
More precisely, there is one firm
producing $G$ goods that are bought by $C$ consumers, and a
player in charge of finding a price $p$ for the goods that clears the market
(having more firms is possible, but not essential for this application).
The Walrasian equilibrium prices the goods in
a manner that maximizes profit for the firm, utility for the
consumers, and clears the market to the best possible extent.

\subsubsection{Problem formulation and QVI blockwise structure}
On the demand side, letting $i=1,\ldots,C$, and given a price $p$,
the $i$th consumer
defines the goods to be bought, 
$x^i \in \mathbb{R}^G$, according to 
preferences determined by a concave utility function 
$\mathcal U^i(\cdot)$ and taking into account
an initial endowment $\mathcal{E}^i_j$ for each good
(that impacts on the consumer willingness to buy more of the good).
The $i$th consumer problem is
\begin{equation}\label{player}
\displaystyle\max_{x^i\geq 0}\left\{ \mathcal U^i(x^i) :
\displaystyle\left< p,x^i \right> \leq \left< p, \mathcal{E}^i\right> 
\right\}\,.
\end{equation}

The firm, denoted here as player $C+1$, decides its production of goods, $x^{C+1}\in \mathbb{R}^G$,
by solving the problem
\begin{equation}\label{firm}
\displaystyle\max_{x^{C+1}\geq 0} \left\{ \left< p, x^{C+1} \right>:
\sum\limits_{j=1}^G{(x^{C+1}_j)^2} \leq M
\right\}\,,
\end{equation}
where the capacity constraint depends on a parameter
$M>0$ and the price $p\in\mathbb{R}^G$ is given.

The equilibrium condition means that the supply meets exactly the demand in every market. Because the economy has $G$ goods and there are endowments,
the player in charge of clearing the market 
looks for (normalized) prices that solve the problem
\begin{equation}\label{market}
\displaystyle\max_{p\geq 0}\left\{
\displaystyle\left< p, \,\sum_{i=1}^C (x^i-\mathcal E^i) -x^{C+1}\right> 
:\displaystyle\sum_{j=1}^G p_j = 1 
\right\}.
\end{equation}

The Walrasian equilibrium 
results from reformulating 
the generalized Nash game 
associated with \eqref{player}, \eqref{firm}, and \eqref{market}
as a quasi-variational inequality \eqref{qVI}-\eqref{Kx}.
To this aim, we rename
the market-clearing player solving \eqref{market} as agent $C+2$
and its decision variable
$x^{C+2}=p$, so that the decision variable is
$x=\begin{bmatrix} x^1&\hdots&x^C&x^{C+1}&x^{C+2}\end{bmatrix}^T$. 

For $x\in\mathbb{R}^{(C+2)G}$, the game in the format \eqref{qVI}-\eqref{Kx},
has the operator  
\begin{equation}\label{W-F}
F(x) = 
\prod\limits_{i=1}^C \left(-\nabla_{x^i} \mathcal U^i(x^i)\right)\times 
\begin{bmatrix}
    -x^{C+2}\\
   \sum\limits_{i=1}^C (\mathcal{E}^i-x^i) +x^{C+1}\\
\end{bmatrix}\,,
\end{equation}
revealing a separable structure in its first $C$ components.
The constraint sets in \eqref{Kx} are
\begin{equation}\label{W-K}
\begin{array}{ll}
 K_g(x)&=\left\{y\in \mathbb{R}^{(C+2)G}:
g_i(y,x)=\sum\limits_{j=1}^Gx^{C+2}_j(y^i_j-\mathcal{E}^i_j)\leq 0,\mbox{ for } 1\leq i\leq C\right\}\,,
\\
 K_h&=\left\{y\in \mathbb{R}^{(C+2)G}:
\begin{bmatrix}-y\\\sum\limits_{j=1}^G{(y^{C+1}_j)^2-M}\end{bmatrix}\leq 0
\mbox{ and }
\sum\limits_{j=1}^G y^{C+2}_j -1=0
\right\}\,.\end{array}
\end{equation}
Since the easy constraints are uncoupled
for the different players, we separate $K_h$ following the
structure observed in \eqref{W-F}. Accordingly, we 
first define the sets
\[K_h^{a}= \left\{y^a\in \mathbb{R}^G: -y^a\leq0\right\}\,,
\quad\mbox{ if $1\leq a\leq C$.}\]
Constraints involving variables with index larger than $C$
are gathered into one set, indexed by $D$:
\[\begin{array}{rcl}K_h^{D}&=&\left\{y^D=(y^{C+1},y^{C+2})\in \mathbb{R}^G\times\mathbb R^G: -y^D\leq0\,,\right.\\
&&\hspace{3cm}\left.\sum\limits_{j=1}^G (y^{C+1}_j)^2 -M\leq 0\,,
\sum\limits_{j=1}^G y^{C+2}_j -1=0\right\}\,,\end{array}
\]
so that
\[
\mbox{ for } A=\{1,\ldots,C\}\cup\{D\}\,,\quad 
y=\prod_{a\in A} y^a\mbox{ and }
K_h=\prod_{a\in A} K^a\,.
\]
In this notation, the operator in \eqref{W-F} has the blockwise expression
\[
F(y) = \prod\limits_{a=1}^C F_a(y^a)\times F_{D}(y)
\mbox{ for }\left\{\begin{array}{rcl}
F_a(y^a)&=&-\nabla_{y^a} \mathcal U^a(y^a)\mbox{ if }1\leq a\leq C\,,\mbox{ and }\\
F_{D}(y)&=&
\begin{bmatrix}
    -y^{C+2}\\
    \sum\limits_{i=1}^C (\mathcal{E}^i-y^i) +y^{C+1}\\
\end{bmatrix}\,.
\end{array}\right.\]
To achieve separability in the VI subproblem operator, we make a Jacobi approximation
for $F_D$, fixing the consumer variables to their value at $x^k$:
\[{F^k_{D}}(y)\approx  \mathcal F^k_{D}(y^{D})=
{F^k_{D}}(x^k_1,\ldots,x^k_C,y^{C+1},y^{C+2})
\,.\]
Because of the linear dependence on $y$ for $g(y,x)$ in the current setting, 
any choice for $\Gamma^k$ 
in \eqref{aprox-grad} reduces to the constant approximation.
We take the exact option for $\widehat F^k$ in \eqref{aprox}.

\subsubsection{Solvers, data generation, and results}

For the {\sc direct} solver, 
we follow the recommendations in  
\cite[Section~3]{Kim_2019} to describe GNEPs using EMP.
The combination of EMP and GAMS was also employed in {\sc dw} to solve
all master QVI problems \eqref{master} and the VI subproblems \eqref{subpJ} 
for the block corresponding to $a=D$.
The remaining VI subproblems \eqref{subpJ}, the consumers' blocks $a=1,\ldots C$, are simple quadratic programs solved
calling the CVXOPT package in Python. 

To create random instances for \eqref{W-F}-\eqref{W-K}, we follow
\cite{Facchinei_2013, Facchinei_2010,QVILIBAL} and consider 
\[\mbox{quadratic concave utilities }
\mathcal U^i(x^i)=-\frac{1}{2}\left<x^{i },R^ix^i\right>+\left<b^i,x^i\right>
 \mbox{for }i=1,\ldots,C\,.\] 
The vector $b^i$ has $G$ uniformly distributed components ranging 
between 0 and 10. The positive semidefinite matrix $R^i$ is created as follows.
 First a random matrix $A^i$ of order $G$ is generated, with
elements uniformly distributed in $[-1,1]$. Then, for $B^i=A^{iT}A^i$, we 
set $R^i=\frac{10}{\|B^i\|_{\infty}}B^i$, so that
$R^i$ elements belong to $[-10,10]$ regardless the size of $G$. Endowments are randomly generated with a
uniform distribution in $[0,10]$. Finally, the firm's capacity $M$ was set at a sufficiently large value to meet the market demand in the large problems (the chosen value for $M$ had none or little impact in the results).

The numerical experiments were designed by varying the number of consumers $C$ and goods $G$, leading to 
QVI dimensions $n=(C+2)G$ ranging between 100 and 50,000. The results are grouped in 
two different benchmarks, noting that 20 different random instances were run for each considered dimension
in all the cases. For the {\sc dw} method, the initial point assigns prices equal to $\frac{1}{G}$ and zero to the remaining variables, whereas for the {\sc direct} method, we use the default GAMS parameters, which set the initial point to zero.

In {\sc benchmark} 1, we fix the number of consumers and vary the number of goods:
 $(C,G)\in \{20\}\times\{20,30,50,100,150,200,250\}$. Both solvers were run until triggering their
respective stopping test.
For this benchmark, and as a sanity test,
we also checked that the solutions found by {\sc direct} and {\sc dw} were numerically identical.
\begin{table}[hbt]
\caption{CPU time for solvers {\sc direct} and {\sc dw}, and number of {\sc dw} iterations with
140 instances of {\sc benchmark 1}. 
In each row, the solver in bold face is the one having the lowest mean time of execution.}\label{tab1}%
\begin{tabular}{@{}ccc|cccccc@{}}
\toprule\vspace{-.5em}
\multirow{2}{*}{\mbox{$n$}} &
\multirow{2}{*}{\mbox{$(C,G)$}}&
\multicolumn{1}{c|}{\mbox{ratio}}& 
\multicolumn{2}{l}{\mbox{{\sc direct} time (s)}} &
\multicolumn{2}{l}{\mbox{{\sc dw} time  (s)}} &
\multicolumn{2}{l}{\mbox{{\sc dw} iterations}}  \vspace{.5em}\\
\multicolumn{1}{l}{} &  &$G/C$ & mean & max  & mean & max  & mean & max\\
\midrule 
440&(20,20)   & 1  &\bf 0.17 & 0.19 & 1.51 & 2.78 & 9.75 & 16  \\
660&(20,30)   & 1.5 &\bf 0.27 & 0.29  & 1.49 & 2.28 & 9.30 & 12 \\
1,100&(20,50)   & 2.5 &\bf 0.59 & 0.65 & 1.41 & 2.65 & 7.50 &13\\
2,200&(20,100)    & 5 & 2.26 & 2.51 &\bf 1.21 & 2.54 & 5.45 & 10   \\
3,300&(20,150)    & 7.5 & 5.44 & 5.91 &\bf 1.30 & 1.80 & 4.05 & 5  \\
4,400&(20,200)    & 10 & 10.99 & 11.87 &\bf 1.32 & 2.37 & 4.25 & 7 \\
5,500&(20,250)    & 12.5 & 19.57 & 20.97 &\bf 1.54 & 1.94 & 4.05 & 5  \\
\botrule
\end{tabular}
\end{table}

As seen in the output reported in Table~\ref{tab1}, 
{\sc direct} performs with almost instantaneous execution times
for dimensions up to 1,100. 
But as the problem size increases, {\sc dw} becomes competitive and largely surpasses {\sc direct}. 
The benchmark confirms the positive impact of decomposition: 
the sequential {\sc dw} iterative process, solving 
small QVI master problems and decoupled VI-subproblems, finds the same solutions as {\sc direct} 
with eventually less computational effort.
Table~\ref{tab1} also hints at an interesting phenomenon, related to the values reported
in the third column. Specifically, {\sc dw} surpasses {\sc direct} when
the ratio $G/C$ is larger than 2.5. Moreover for $G/C\geq 5$ the number of {\sc dw} iterations and its
mean running time remains practically the same, even though the dimension $n$ grows from 2,200 to 5,500.

\begin{figure}[htb]
\centering
\includegraphics[width=1\textwidth]{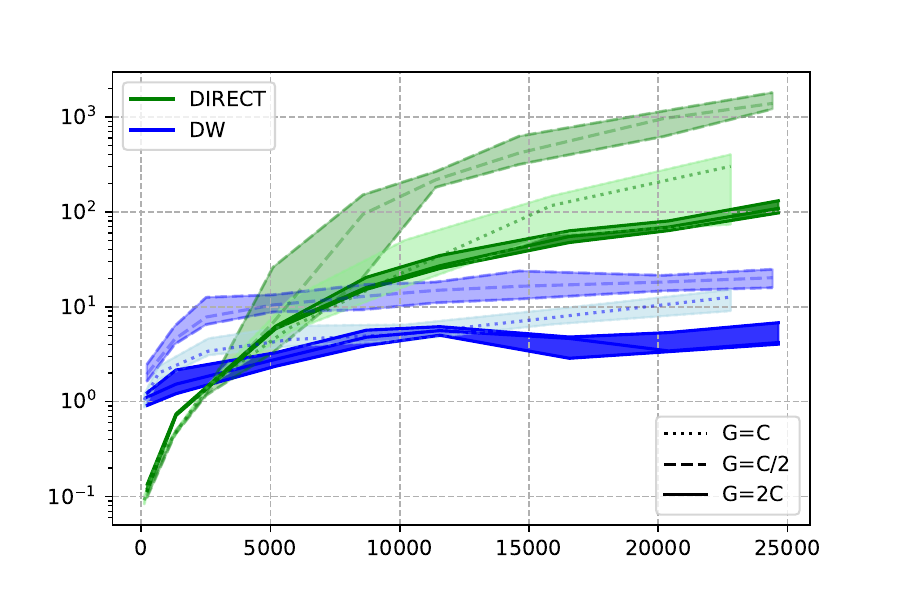}
\caption{Time statistics (median and 25\%-75\% quantiles) 
for {\sc direct} and {\sc dw}, separating the three different ratios between goods and consumers in Table~\ref{tab2}.
For better illustration, three different economy configurations are plotted separately, in a semilogarithmic 
scale and barring the last two lines in the table.
}\label{fig1}
\end{figure}

In order to include more configurations favorable to the direct solution,
the next benchmark varies the ratio $C/G$ in $\{0.5,1,2\}$, generating instances with QVI dimension $n\in[120,40,400]$.
Statistics for the corresponding results are reported graphically in
Figure~\ref{fig1}, with full details in  Table~\ref{tab2}.

Figure~\ref{fig1} gives a clear graphical confirmation that, as $n$ increases, {\sc dw} solving
time remains stable while {\sc direct}'s exhibits an exponential growth. 
Also, the wide shaded areas observed for {\sc direct} when $G=C$ and $G=C/2$ correspond to
the direct solution approach being more sensitive than {\sc dw} to the ration $G/C$.
In the detailed output in Table~\ref{tab2}, for different configurations of the economy {\sc dw} performs
better than {\sc direct} as soon as the QVI dimension $n=(C+2)G$ becomes larger than 5,000.

\begin{table}[htb]
\caption{CPU time for solvers {\sc direct} and {\sc dw}, and number of {\sc dw} iterations with
540 instances of {\sc benchmark 2}. 
In each row, the solver in bold face is the one having the lowest mean time of execution.}\label{tab2}%
\begin{tabular}{@{}ccccccccc@{}}
\toprule
\multirow{2}{*}{\mbox{$n$}} &
\multirow{2}{*}{\mbox{$(C,G)$}}&
\multirow{2}{*}{\mbox{ratio}}&
\multicolumn{2}{l}{\mbox{{\sc direct} time (s)}} &
\multicolumn{2}{l}{\mbox{{\sc dw} time (s)}} &
\multicolumn{2}{l}{\mbox{{\sc dw} iterations}} \vspace{.5em}\\
\multicolumn{1}{l}{} & &$G/C$ & mean & max & mean & max & mean & max\\
\midrule
120 & (10, 10) & 1 & \bf 0.09 & 0.14 & 1.04 & 1.86 & 7.80 & 13 \\
220 & (20, 10) & 0.5 &\bf  0.11 & 0.16 & 2.12 & 4.06 & 12.90 & 22 \\
240 & (10, 20) & 2 &\bf  0.13 & 0.15 & 1.11 & 2.02 & 8.30 & 14 \\
625 & (25, 25) & 1 &\bf  0.25 & 0.29 & 2.10 & 3.21 & 11.95 & 17 \\
1,300 & (50, 25) & 0.5 &\bf  0.47 & 0.60 & 4.99 & 9.80 & 19.05 & 30 \\
1,350 & (25, 50) & 2 &\bf  0.73 & 0.78 & 1.67 & 3.28 & 8.05 & 14 \\
2,520 & (70, 35) & 0.5 &\bf  1.27 & 2.00 & 10.09 & 21.60 & 23.55 & 38 \\
2,600 & (50, 50) & 1 &\bf  1.49 & 1.61 & 3.79 & 5.69 & 13.15 & 18\\
5,100 & (100, 50) & 0.5 & 15.81 & 65.71 &\bf  11.53 & 26.89 & 21.25 & 36\\
5,200 & (50, 100) & 2 & 6.10 & 6.84 &\bf  2.96 & 6.56 & 8.20 & 16 \\
5,775 & (75, 75) & 1 & 8.18 & 14.21 &\bf  5.00 & 8.46 & 11.75 & 18 \\
8,580 & (130, 65) & 0.5 & 107.90 & 327.86 &\bf  13.29 & 23.29 & 18.95 & 28 \\
8,710 & (65, 130) & 2 & 19.73 & 45.98 &\bf  4.76 & 8.08 & 6.95 & 11 \\
10,200 & (100, 100) & 1 & 84.27 & 638.98 &\bf  5.96 & 11.25 & 10.20 & 17 \\
11,400 & (150, 75) & 0.5 & 260.89 & 749.89 &\bf  15.21 & 29.71 & 18.35 & 29 \\
11,550 & (75, 150) & 2 & 31.73 & 82.83 &\bf  5.74 & 9.00 & 6.70 & 10 \\
14,620 & (170, 85) & 0.5 & 513.31 & 1,499.38 &\bf  17.73 & 31.99 & 18.25 & 28 \\
15,875 & (125, 125) & 1 & 438.68 & 2,903.90 &\bf  7.81 & 11.60 & 9.30 & 13\\
16,560 & (90, 180) & 2 & 68.37 & 195.74 &\bf  4.18 & 8.38 & 5.40 & 10 \\
20,200 & (200, 100) & 0.5 & 953.97 & 2,420.30 &\bf  18.08 & 31.22 & 15.85 & 24 \\
20,400 & (100, 200) & 2 & 75.97 & 113.51 &\bf  4.34 & 9.13 & 4.90 & 9 \\
22,800 & (150, 150) & 1 & 450.60 & 2,331.93 &\bf  13.01 & 24.37 & 8.15 & 14 \\
24,420 & (220, 110) & 0.5 & 1,638.28 & 4,765.29 &\bf  21.32 & 37.76 & 15.80 & 24 \\
24,640 & (110, 220) & 2 & 156.74 & 604.68 &\bf  5.24 & 8.32 & 4.85 & 7 \\
30,975 & (175, 175) & 1 & 583.40 & 5,192.05 &\bf  9.23 & 21.55 & 6.25 & 13 \\
31,750 & (125, 250) & 2 & 179.81 & 306.53 &\bf  5.31 & 5.47 & 4.00 & 4 \\
40,400 & (200, 200) & 1 & 1,186.40 & 9,133.32 &\bf  10.12 & 22.72 & 6.10 & 12 \\
\botrule
\end{tabular}
\end{table}

For both solvers, the lower the ratio $G/C$, the harder the problem becomes,
because of the number of constraints.
In all the considered instances, {\sc dw} took less than 20 seconds in average, while 
{\sc direct} times varied between 0.09 seconds and 30 minutes.
As hinted by the shaded areas in Figure~\ref{fig1} and noticeable in 
the fifth column in Table~\ref{tab2}, {\sc direct}'s maximum execution times
appear to be highly sensitive to the economy configuration. 

\subsection{Moving Set Problems}\label{mov}
We now turn our attention to problems MovSet3A1, MovSet3B1, MovSet3A2, and MovSet3B2 from 
\cite{QVILIBAL}, see also \cite{Facchinei_2013}. In these problems,  there is no $h$ constraint
and $g(x,y)$ depends quadratically on $y$.

Given positive dinite matrices $A,R \in \mathbb{R}^{n\times n}$, 
$B \in \mathbb{R}^{n\times n}, b\in \mathbb{R}^n$, and $d \in \mathbb{R}$, 
the QVI operator in \eqref{qVI} and constraint sets in \eqref{Kx} are
\[
\begin{array}{l}
F(x)=Ax+b\mbox{ for }x\in\mathbb R^n\,,\\
 K_g(x)=\{y\in \mathbb{R}^n: g(y,x) =\left< R(y-Bx), y-Bx\right> -d\leq 0\}\mbox{, and }
 K_h=\mathbb{R}^n\,.\end{array}
\]

As $\nabla_y g(y,x)=2R(y-Bx)$, the setting is suitable to analyze the impact of different values of $\omega$
for $\Gamma^k_j(y)$ in \eqref{aprox-grad}. 
Since $F$ is a linear operator, we take the (exact) approximation $\widehat F^k(y)=F(y)$ in \eqref{aprox}, which
yields, for $\omega^k\in[0,1]$ and $\mu^k\geq0$, 
the following VI operator:
\[
F^{k}(y) = Ay+b+\Bigl((1-\omega^{k})2(Ry-Bx^k)+\omega^k2(R-B)x^k\Bigr)\mu^k\,.
\]
As $K_h=\mathbb R^n$, in this case subproblems \eqref{subp} are simple linear systems
 of equations in $y$.

\begin{table}[hbt]
\caption{MovSet problems using different $\omega$ in $\Gamma^k$ for DW.}\label{MovSetTable}%
\begin{tabular}{@{}cccccc@{}}
\toprule
Problem & n & {\sc direct} (s)& DW (s) & DW (s) & DW (s) \\  & & & $\omega=1$ & $\omega=0$ & $\omega=0.5$ \\
\midrule 
MovSet3A1 & 1,000 & 51.97 & 1.71 (11 it.) &\bf 0.74 (5 it.) & 0.83 (6 it.) \\
MovSet3B1 & 1,000 & 50.88 & 1.64 (11 it.) &\bf 0.72 (5 it.) & 0.90 (6 it.)\\
MovSet3A2 & 2,000 & 285.02 & 4.60 (12 it.) &\bf 1.90 (5 it.) & 3.00 (7 it.)\\
MovSet3B2 & 2,000 & 284.24 & 4.44 (11 it.) &\bf 1.75 (5 it.) & 1.92 (6 it.)\\
\botrule
\end{tabular}
\end{table}
The comparison with {\sc direct} is reported in Table~\ref{MovSetTable}, 
indicating with bold face the
fastest solver for each run. For both methods, the initial point is set to zero.
For this test set, subproblems are solved easily and it is best to include
the full information of the relaxed constraint $g$ ($\omega=0$, i.e.,
 the free option in \eqref{aprox-grad}).

\if{
\section{Conclusion}\label{sec13}

We have proposed a family of decomposition methods for quasi-variational inequalities
 inspired by the Dantzig-Wolfe technique. The method performs well for large-scale problems, 
as indicated by the presented numerical results 
for Walrasian equilibrium problems and moving-set problems.

 For future research, other decomposition methods may be explored for QVIs and other equilibrium problems, 
possibly by weakening the assumptions and using different approximation techniques.
}\fi

\backmatter

\section*{Declarations}

\noindent {\small
{\bf Conflicts of Interest.}
The authors declare that they have no conflict of interest of any kind related to the manuscript.}

\noindent {\small
{\bf Data Availability Statement.}
Data sharing is not applicable to this article.}


\bmhead{Acknowledgements}

The second author is supported by CNPq Grant 307509/2023-0 and by PRONEX--Optimization.
The third author is supported in part by CNPq Grant 306775/2023-9,
by FAPERJ Grant E-26/200.347/2023, and by PRONEX--Optimization.

The authors thank Professor Paulo~J.S.~Silva from UNICAMP for facilitating the access
to IMECC's computational resources that were necessary for conducting our numerical experiments.

\bibliography{references.bib} 

\end{document}